\def\bm#1{\mathchoice                             
  {\mbox{\boldmath$\displaystyle#1$}}%
  {\mbox{\boldmath$#1$}}%
  {\mbox{\boldmath$\scriptstyle#1$}}%
  {\mbox{\boldmath$\scriptscriptstyle#1$}}}
\def\bm#1{\mathchoice                             
  {\mbox{\boldmath$\displaystyle#1$}}%
  {\mbox{\boldmath$#1$}}%
  {\mbox{\boldmath$\scriptstyle#1$}}%
  {\mbox{\boldmath$\scriptscriptstyle#1$}}}
\newcommand{\Rbb}[1]{\mathbb{R}^{#1}}		    
\newcommand{\vect}[1]{\mathbf{#1}} 
\DeclareMathOperator{\diag}{diag}
\DeclareMathOperator{\vectorize}{vec}
\newtheorem*{rep@theorem}{\rep@title}
\newcommand{\newreptheorem}[2]{%
\newenvironment{rep#1}[1]{%
 \def\rep@title{#2 \ref{##1}}%
 \begin{rep@theorem}}%
 {\end{rep@theorem}}}
\newtheorem{remark}{Remark}
\newtheorem{theorem}{Theorem}
\newtheorem{problem}{Problem}
\newtheorem{lemma}{Lemma}
\newcommand{\onehalf}{\nicefrac{1}{2}}
\begin{document}

\begin{frontmatter}

\title{COSMIC: fast closed-form identification from large-scale data for LTV systems} 


\author[GMV,IST]{Maria Carvalho}\ead{mariaccarvalho@tecnico.ulisboa.pt},
\author[FCT]{Claudia Soares}\ead{claudia.soares@fct.unl.pt}, 
\author[GMV]{Pedro Lourenco}\ead{palourenco@gmv.com},
\author[ISR]{Rodrigo Ventura}\ead{rodrigo.ventura@isr.tecnico.ulisboa.pt}

\address[GMV]{GMV, Portugal}    
\address[IST]{Instituto Superior Técnico, U. de Lisboa, Portugal}        
\address[FCT]{NOVA LINCS, Computer Science Department, Nova School of Science and Technology, U. Nova de Lisboa, Portugal}
\address[ISR]{Institute for Systems and Robotics - Lisboa, Portugal}
          
\begin{keyword}                           
Closed-form; system identification; linear time-variant.  
\end{keyword}                             

\begin{abstract}                          
We introduce a closed-form method for identification of discrete-time linear time-variant systems from data, formulating the learning problem as a regularized least squares problem where the regularizer favors smooth solutions within a trajectory. We develop a closed-form algorithm with guarantees of optimality and with a complexity that increases linearly with the number of instants considered per trajectory. The COSMIC algorithm achieves the desired result even in the presence of large volumes of data. Our method solved the problem using two orders of magnitude less computational power than a general purpose convex solver and was about 3 times faster than a Stochastic Block Coordinate Descent especially designed method. Computational times of our method remained in the order of magnitude of the second even for 10k and 100k time instants, where the general purpose solver crashed.
To prove its applicability to real world systems, we test with spring-mass-damper system and use the estimated model to find the optimal control path. Our algorithm was applied to both a Low Fidelity and Functional Engineering Simulators for the Comet Interceptor mission, that requires precise pointing of the on-board cameras in a fast dynamics environment. Thus, this paper provides a fast alternative to classical system identification techniques for linear time-variant systems, while proving to be a solid base for applications in the Space industry and a step forward to the incorporation of algorithms that leverage data in such a safety-critical environment.
\end{abstract}

\end{frontmatter}

\section{Introduction}

The identification of a system's behavior constitutes a problem in many areas of engineering~\cite{aastrom1971system}, from biological relations~\cite{crampin2006system} to physical systems' dynamics~\cite{agbabian1991system}, and it is the first step to any control design problem. It is imperative to have a system model that correctly represents its interaction with the environment and allows for more accurate predictions of its response to external stimuli. 

Typically, this issue is addressed by considering previous knowledge about the system, applying first principles of physics and measuring whenever possible~\cite{Glad2013}. However, for more intricate systems, this approach fails because it may become impossible to measure all the parameters needed to characterize a behavior completely or the system is too complex to be modeled by simple equations~\cite{budiyono2011modeling}. In order to address these limitations, a data-driven approach to the system identification problem, where input-output data is used to find a model that describes a particular system, has become more common~\cite{brunton2019data}. Finding more efficient and comprehensive universal algorithms to deal with a large amount of data is essential to derive reasonable data-driven solutions and constitutes a pressing issue~\cite{cheng2015interplay}.

According to Lamnabhi-Lagarrigue et al.~\cite{lamnabhi2017systems}, system modeling represents a significant cost in complex engineering projects, sometimes up to 50\% of the total cost, mainly due to the man-hours of the expert engineers dedicated to this task. Thus, as it is also stated, it is essential to create practical system identification tools that adapt to a wide range of problems and achieve a solution in a time-constrained setting. Such tools can be especially useful in a Space mission project that represents huge cost efforts for entire countries and agencies. The integration of Machine Learning (ML), and Guidance, Navigation and Control (GNC) can be helpful in this set of problems, from  building robust control frameworks that address parameter varying systems to applying verification and validation techniques to a system in a more efficient way.


We are going to analyze the problem of pointing a spacecraft to a comet during a fly-by, inspired by the Comet Interceptor mission~\cite{esa:2019:AssessmentMissionIntercept}. The pointing requirement can be posed as an attitude guidance and control problem, aiming at keeping the comet within the field-of-view of the scientific instruments as shown in Figure~\ref{fig:variation}(a). The fly-by is performed at high speed from a large distance to the comet, which makes meeting the requirement harder the closer the spacecraft gets to the target - significant torques and angular velocities are required (see Figure~\ref{fig:variation}(b)). Considering the linearized error dynamics w.r.t. the reference attitude, angular velocity, and torque, it becomes clear that the evolution is driven by the reference angular velocity, thus also having relevant variations throughout the scenario. However, it can be observed that this variation between instants is bounded, when considering a discrete-time setting as shown in Figure~\ref{fig:variation}(c). For optimal controller design purposes, nonlinear systems such as the error dynamics can benefit from being modeled as linear time-variant (LTV)~\cite{schoukens2019nonlinear,vanbeylen2013nonlinear}.


\begin{figure}[!htb]
\begin{subfigmatrix}{3}
    \subfigure[Comet Interceptor Scenario.]
    {\includegraphics[width=0.85\linewidth]{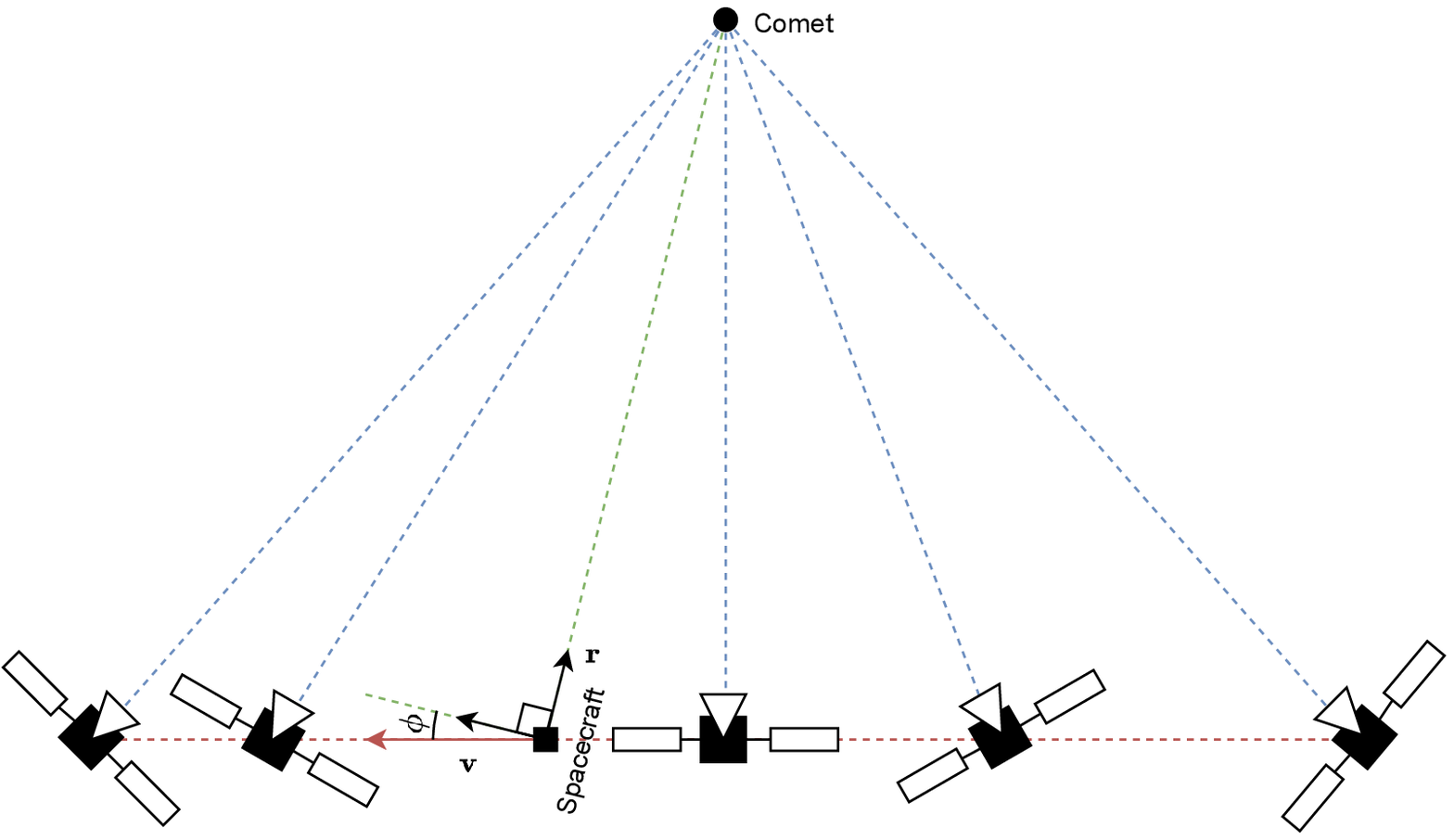}}
    \subfigure[Angular velocity evolution.]
    {\includegraphics[width=0.49\linewidth]{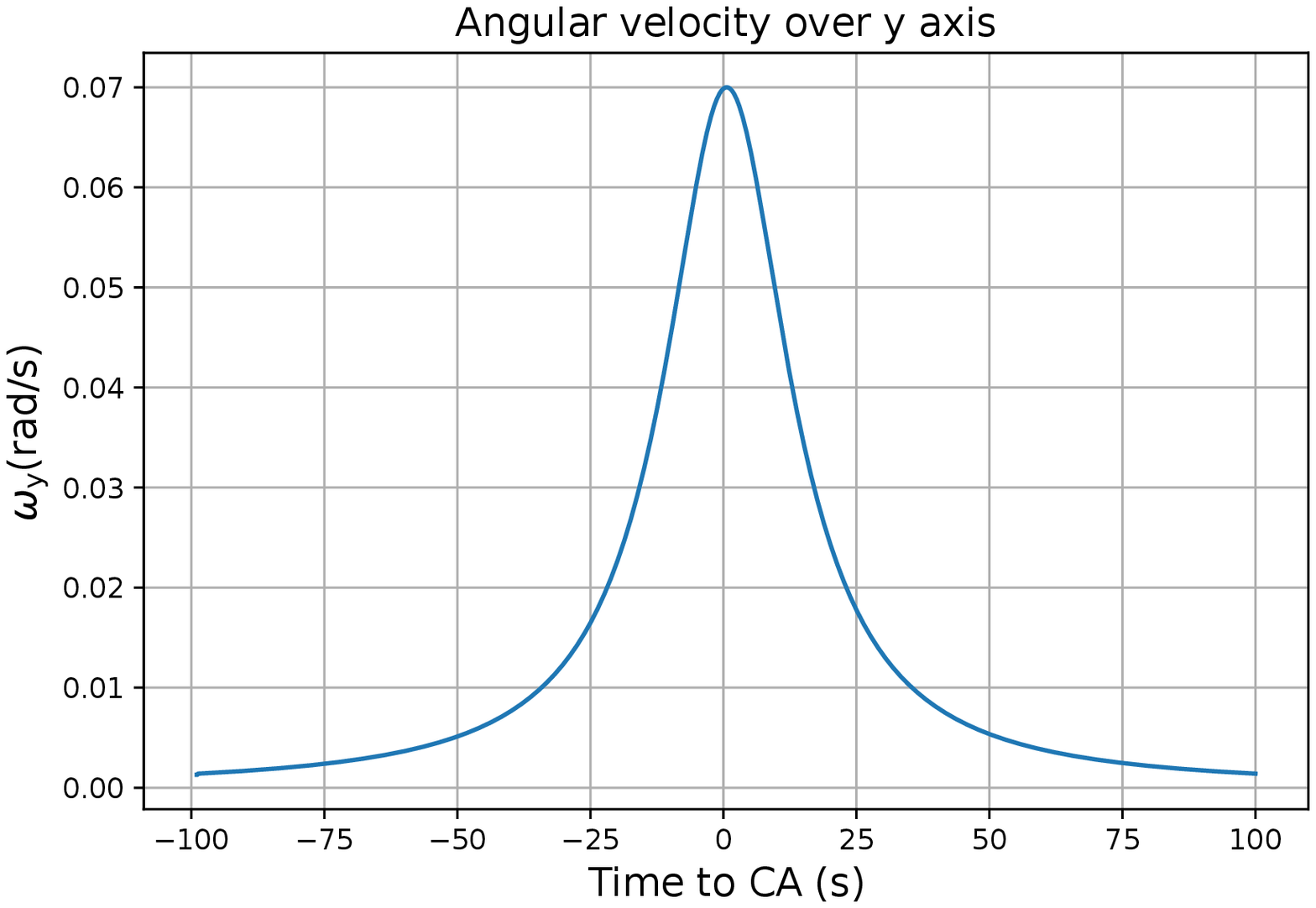}}
    \subfigure[Variation of error dynamics.]
    {\includegraphics[width=0.49\linewidth]{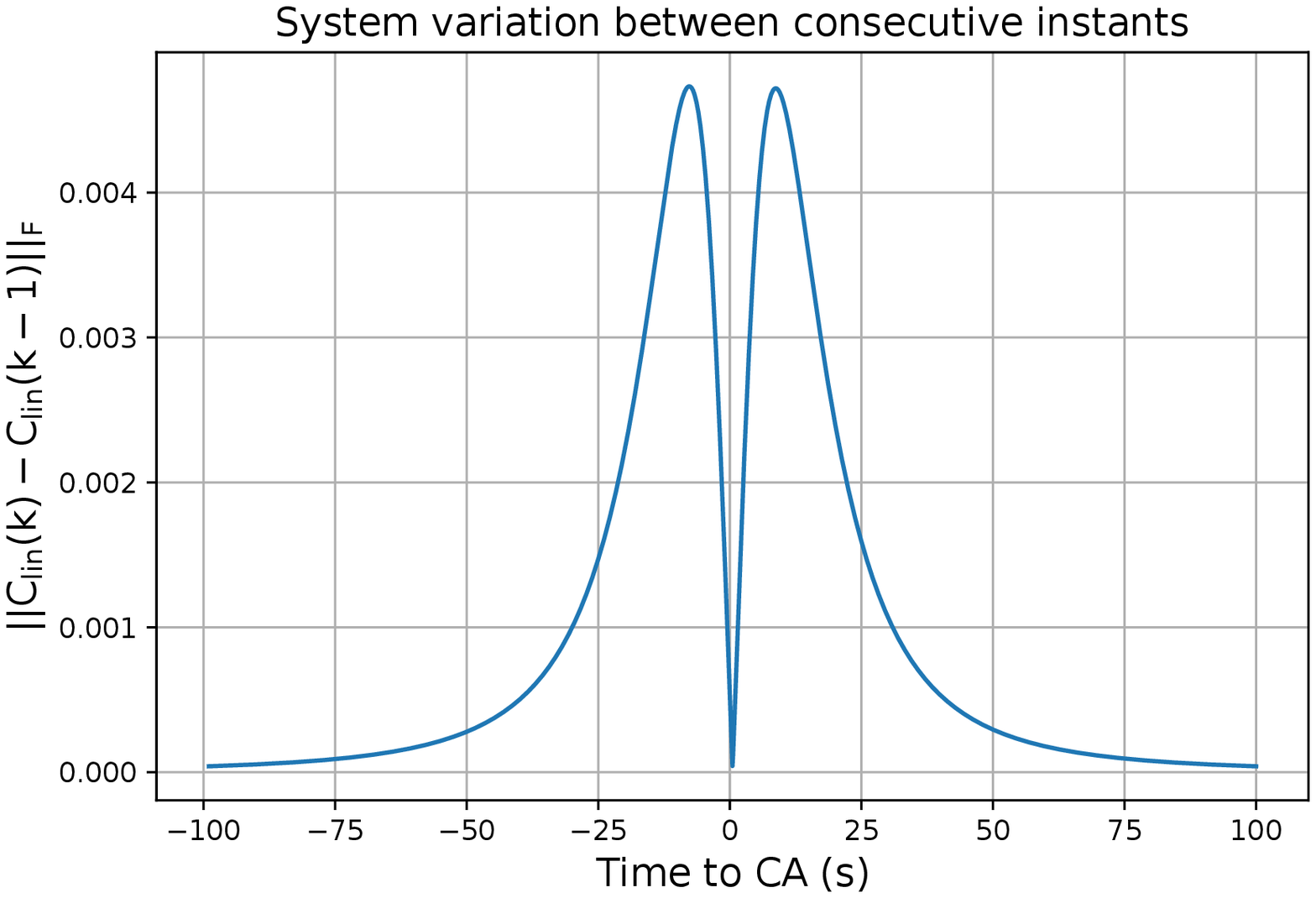}}
\end{subfigmatrix}
\caption{The Comet Interceptor mission: overall trajectory, associated angular velocity and variation of error dynamics. Due to the large distance and high speed, to point the spacecraft to the comet, the angular velocity has a large amplitude, with reaching significant values near the closest approach. When considering the associated linearized error dynamics, it is clear that the variations are bounded from one instant to the next, entailing smoother transitions.}
\label{fig:variation}
\end{figure}

\subsection{Related work}

A large body of work has been developed in order to incorporate diverse information in system identification and controller synthesis. The results described in~\cite{Ljung1998} establish the grounds for system identification from observed data. Earlier, the work of Kumpati et al.~\cite{kumpati1990identification} already proposed a system identification strategy based on Neural Networks. Nevertheless, data availability and computational and algorithmic tools were limited, so performance and speed were constrained.

Recently, there has been more focus on providing guarantees that a problem can be solved to a given precision in finite time, which is crucial to bridge the gap to optimal control. A lot of work has been developed regarding linear time-invariant (LTI) systems ~\cite{hardt2016gradient,dean2018sample,sarkar2019near}. Although these are significant achievements, not many real world systems can be represented as LTI and generalizing these findings to LTV systems is essential for a broader application of the results.

The literature is not as extensive regarding the direct application of data-driven system identification to linear time-variant systems. However, this problem has been a concern for many years. Dudul et al~\cite{dudul2004identification} apply Feedforward Neural Networks to identify an LTV system, assuming a transfer function characterization. Lin et al.~\cite{lin2020system} propose an episodic block model for an LTV system, where parameters within a block are kept constant, followed by the exploration of a meta-learning approach for system identification divided into two steps: an offline initialization process and online adaptation. Formetin et al~\cite{formentin2021control} have recently proposed a system identification procedure that takes control specifications into account in the form of regularization.

\subsection{Main contribution}
Given what is presented above, this work aims to develop system identification techniques for LTV systems in order to create models that are simultaneously accurate enough to represent a wide range of phenomena while being simple enough to use with well-known and widely used control techniques, leading to more robust control algorithms. We encode problem specific information in the formulation as a regularized least squares. Our main contribution is an algorithm that guarantees a solution of the linear time-variant system identification problem in a finite and deterministic number of operations. The main challenges of this proposal are the amount of data needed to characterize one trajectory due to its changing dynamics and guaranteeing a solution for the problem in finite-time. 



\section{Problem Statement}

We denote a discrete linear time-variant system transition equation as
\begin{equation} \label{eq:sys_def}
    \vect{x}(k+1) = \vect{A}(k)\vect{x}(k) + \vect{B}(k) \vect{u}(k),
\end{equation}
such that $k\in [0, ..., N-1]$, with $N+1$ being the total number of instants considered to be part of one trajectory. In a given instant $k$, the state is $\vect{x}(k)\in\Rbb{p}$ and the control input is $\vect{u}(k)\in\Rbb{q}$. System parameters $\vect{A}(k)\in\Rbb{p \times p}$ and $\vect{B}(k)\in\Rbb{p \times q}$ are, respectively, the dynamics and control matrices that define the system's response, and the unknown variables we aim to derive from data.

Assume we collect a dataset with $L$ trajectories for the $N+1$ time steps per trajectory. Taking into account all the data available, we need to additionally define the matrices that contain the state measurements, $\vect{X}(k) \in \Rbb{p \times L}$ as $\vect{X}(k) =  \begin{bmatrix}
    \vect{x}_1(k) & \vect{x}_2(k) & ... & \vect{x}_L(k)
    \end{bmatrix}$ and the control data, $\vect{U}(k) \in \Rbb{q \times L}$ as $\vect{U}(k) =  \begin{bmatrix}
    \vect{u}_1(k) & \vect{u}_2(k) & ... & \vect{u}_L(k)
    \end{bmatrix},$ of all the $L$ different trajectories for the $k$-th instant of the trajectory. Moreover, we define $\vect{X'}(k)$ as $\vect{X'}(k) = \vect{X}(k+1)$.

To find the proper solution for system~\eqref{eq:sys_def}, we start by defining the optimization variable $\vect{C}(k) \in \Rbb{(p+q) \times p}$ as $\vect{C} (k) =  \begin{bmatrix}
    \vect{A}^T (k) \\ \vect{B}^T (k) \end{bmatrix}$, with $\vect{C} = \left[\vect{C}(k)\right]_{k=0}^{k=N-1}$, such that $\vect{C} \in  \Rbb{N(p+q) \times p}$. 
For notational simplicity, we define
    $\vect{D}(k) \in \Rbb{L \times (p+q)}$ as $\vect{D} (k) =  \begin{bmatrix}
    \vect{X}^T (k) & \vect{U}^T (k) \end{bmatrix},$
with $\vect{D} = \left[\vect{D}(k)\right]_{k=0}^{k=N-1}$, following
$  \vect{V} = \diag(\vect{D} (k))$ with $\vect{V} \in \Rbb{NL \times N(p+q)}$.

To limit the variation of the system between instants, thus encoding the verified assumption that a system will not change drastically from one time step to the next (for example, the system described in Figure~\ref{fig:variation}), as addressed in the motivation of this work, we add a term to the cost function, with regularization parameters $\lambda_k > 0$. By allowing for the $\lambda_k$ to vary throughout the trajectory, we impose more flexibility to the problem formulation, covering a wider range of scenarios.

As such, we are solving
\begin{problem}\label{prob:prob-with-sum}
\begin{equation} 
\begin{aligned}
    \underset{\vect{C}}{\text{minimize}}
    \hspace{0.2cm} f(\vect{C}) & :=  \frac{1}{2} \| \vect{V} \vect{C} - \vect{X'}\|_F ^2 \\ & +   \frac{1}{2} \sum_{k=1}^{N-1} \| \lambda_k^{\onehalf} (\vect{C}(k) - \vect{C}(k-1))\|_F ^2.
\end{aligned}
\end{equation}
\end{problem}

\begin{remark}
The problem we are solving can be seen as a trade off between how close the optimization variable is to the data and how much we allow for it to change between instants. Thus, we can address the tuning of parameter $\lambda_k$ as the tuning of the relative importance of each objective. Higher values of $\lambda_k$ are congruent with little variation of the system behavior between instants and will emphasize the weight of the second term has in the cost function and lower values will allow for more drastic changes between $k-1$ and $k$.
\end{remark}
Moreover, to aid the analysis, we can define the second term of $f$ writing all the weighted difference equations $\lambda_k^{\onehalf}\left(\vect{C}(k) - \vect{C}(k-1)\right)$ as a matrix product 
\begin{equation} \label{eq:dif_equation}
\resizebox{.45\textwidth}{!}{$
  \underbrace{ \begin{bmatrix}
    \lambda_1^{\onehalf} & 0 & ... & 0 & 0\\
    0 & \lambda_2^{\onehalf} & ... & 0 & 0\\
    ... & ...  & ... & ... & ... \\
    0 & 0 & ... & \lambda_{N-2}^{\onehalf} & 0\\
    0 & 0 & ... & 0 & \lambda_{N-1}^{\onehalf} \\ 
 \end{bmatrix}}_\text{$\vect{\Upsilon}^{\onehalf}$}
 \underbrace{\begin{bmatrix} 
    -\vect{I} & \vect{I} & \vect{0} & ... & \vect{0} & \vect{0} & \vect{0}\\
    \vect{0} &-\vect{I} & \vect{I}  & ... & \vect{0} & \vect{0} & \vect{0}\\
    ... & ...  & ... & ... & ... & ... & ... \\
    \vect{0} &\vect{0} & \vect{0} & ... & -\vect{I} & \vect{I} & \vect{0}\\
    \vect{0} &\vect{0} & \vect{0} & ... & \vect{0} & -\vect{I} & \vect{I}\\
    \end{bmatrix}}_\text{$\vect{F}$}  
 \begin{bmatrix} 
    \vect{C}(0)\\
    \vect{C}(1)\\
    ...\\
    \vect{C}(N-1)\\
    \end{bmatrix},$}
\end{equation}
allowing Problem~\ref{prob:prob-with-sum} to be written as
\begin{problem} \label{prob:RLS}
\begin{equation} \label{eq:prob-compact}
    \underset{C}{\text{minimize}}
    \hspace{0.5cm} f(\vect{C}) :=    \frac{1}{2} \| \vect{V} \vect{C} - \vect{X'}\|_F ^2  +   \frac{1}{2}  \| \vect{\Upsilon}^{\onehalf} \vect{F}  \vect{C} \|_F^2.
\end{equation}
\end{problem}

This statement makes explicit that we are solving an unconstrained convex optimization problem. As Problem~\ref{prob:RLS} is an unconstrained minimization of a quadratic function, solving~\eqref{eq:prob-compact} amounts to solving equation
\begin{equation} \label{eq:cond}
    \vect{\nabla} f (\vect{C}) = \vect{0}.
\end{equation} 
The following linear equation solves the problem, 
\begin{equation} \label{eq:solution}
    \begin{split}
         \nabla f(\vect{C}) & = \vect{V}^T(\vect{V} \vect{C}^* - \vect{X'}) + \vect{F}^T \vect{\Upsilon} \vect{F}  \vect{C}^* = 0 \\
                     & \Leftrightarrow \left(\vect{V}^T \vect{V} + \vect{F}^T \vect{\Upsilon} \vect{F} \right) \vect{C}^* = \vect{V}^T \vect{X'} \\
                     & \Leftrightarrow \vect{C}^* = \left(\vect{V}^T \vect{V} + \vect{F}^T \vect{\Upsilon} \vect{F} \right) ^{-1} \vect{V}^T \vect{X'}
    \end{split}.
\end{equation}

We now derive a condition on data that guarantees that the information collected is enough to reach a solution and correctly identify the system.

\begin{reptheorem}{theor:unique}[\textbf{\textit{Informal}}: When is the dataset large enough?]

When the collected data is sufficiently varied, there is a unique solution to Problem~\ref{prob:prob-with-sum}.
Further, after each collected trajectory, it is possible to identify if there is enough information for attaining a unique solution by computing the sum of the required trajectories' covariances and testing it for positive definiteness.
\end{reptheorem}

To prove Theorem~\ref{theor:unique}, we will require two lemmas.

\begin{lemma}\label{lemma:im(b)}
Let $\vect{B} \succeq 0 $, $\vect{B}\in \Rbb{m \times m}$ and $\vect{x} \neq 0$. If $\vect{x} \in Im(\vect{B})$ then $\vect{x}^T \vect{B} \vect{x} > 0$.
\end{lemma}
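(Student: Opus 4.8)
The plan is to exploit the spectral decomposition of the symmetric positive semidefinite matrix $\vect{B}$. First I would write $\vect{B} = \vect{Q}\vect{\Lambda}\vect{Q}^T$ with $\vect{Q}$ orthogonal and $\vect{\Lambda} = \diag(\mu_1,\dots,\mu_m)$, $\mu_i \geq 0$. Reordering so that $\mu_1,\dots,\mu_r > 0$ and $\mu_{r+1} = \dots = \mu_m = 0$, the columns $\vect{q}_1,\dots,\vect{q}_r$ of $\vect{Q}$ form an orthonormal basis of $Im(\vect{B})$, while $\vect{q}_{r+1},\dots,\vect{q}_m$ span $\ker(\vect{B})$; in particular $\Rbb{m} = Im(\vect{B}) \oplus \ker(\vect{B})$ as an orthogonal direct sum, which is where the symmetry of $\vect{B}$ is used.

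Next, since $\vect{x} \in Im(\vect{B})$, I would expand $\vect{x} = \sum_{i=1}^{r} c_i \vect{q}_i$, i.e.\ the coordinate vector $\vect{c} = \vect{Q}^T\vect{x}$ is supported on its first $r$ entries. Because $\vect{x}\neq 0$, at least one $c_i$ with $i \leq r$ is nonzero. A direct computation then gives $\vect{x}^T\vect{B}\vect{x} = \vect{c}^T\vect{\Lambda}\vect{c} = \sum_{i=1}^{r} \mu_i c_i^2$, a sum of strictly positive coefficients times nonnegative squares, at least one product being strictly positive; hence $\vect{x}^T\vect{B}\vect{x} > 0$.

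An equivalent and slightly slicker route uses the symmetric square root: $\vect{x}^T\vect{B}\vect{x} = \|\vect{B}^{\onehalf}\vect{x}\|_F^2 \geq 0$, with equality if and only if $\vect{B}^{\onehalf}\vect{x} = 0$, which, since $\ker(\vect{B}^{\onehalf}) = \ker(\vect{B})$, is equivalent to $\vect{x} \in \ker(\vect{B})$. But $\vect{x}\in Im(\vect{B})$ together with $\vect{x}\neq 0$ rules this out, by the orthogonal splitting above, so the inequality is strict.

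There is no real obstacle here: the only step that genuinely requires care — and the only place where symmetry and positive semidefiniteness are essential — is the orthogonal decomposition $\Rbb{m} = Im(\vect{B}) \oplus \ker(\vect{B})$, equivalently the statement that a nonzero vector in the range of $\vect{B}$ cannot simultaneously lie in its kernel. Everything else reduces to a one-line computation.
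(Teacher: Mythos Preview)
Your proof is correct and rests on the same key observation as the paper's: for a symmetric positive semidefinite $\vect{B}$, a nonzero vector in $Im(\vect{B})$ cannot lie in $\ker(\vect{B})$, which forces $\vect{x}^T\vect{B}\vect{x}>0$. The paper argues this by a brief contradiction (if $\vect{x}^T\vect{B}\vect{x}=0$ then $\vect{x}\in\ker(\vect{B})$, hence $\vect{x}=\vect{0}$), whereas you make the same point more explicitly via the spectral decomposition or the square root $\vect{B}^{\onehalf}$; the content is identical.
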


\begin{proof}
If the result of the Lemma does not hold, we would have that $\vect{x} \in \text{Im}(\vect{B})$ and $\vect{x} \in \text{Ker}(\vect{B})$. However, since $\text{Im}(\vect{B}) \cap \text{Ker}(\vect{B}) = \emptyset$, the only option would be to have $\vect{x} = \vect{0}$, which violates the conditions of the Lemma, thus proving that the inference holds.
\end{proof}

\begin{lemma}\label{lemma:a+b}
The following two expressions are equivalent:~
\begin{enumerate}[(i)]
    \item \begin{enumerate}[(a)]
        \item $\vect{A} \succeq \vect{0}$, $\vect{B} \succeq \vect{0}$;
        \item $\forall \vect{v} \in \text{Ker}(\vect{B})\setminus \{\vect{0}\}: \hspace{0.5cm} \vect{v}^T \vect{A} \vect{v} > 0$;
    \end{enumerate}
    \item $\vect{A} + \vect{B} \succ \vect{0}$.
\end{enumerate}
\end{lemma}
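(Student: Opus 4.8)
The plan is to establish the two implications $(i)\Rightarrow(ii)$ and $(ii)\Rightarrow(i)$ separately. The only non-routine ingredient is the orthogonal splitting $\Rbb{m} = \text{Im}(\vect{B}) \oplus \text{Ker}(\vect{B})$, available for any symmetric $\vect{B}\succeq\vect{0}$, combined with Lemma~\ref{lemma:im(b)}: writing $\vect{v} = \vect{v}_I + \vect{v}_K$ with $\vect{v}_I \in \text{Im}(\vect{B})$ and $\vect{v}_K \in \text{Ker}(\vect{B})$, orthogonality gives $\vect{v}^T\vect{B}\vect{v} = \vect{v}_I^T \vect{B}\vect{v}_I$, so by Lemma~\ref{lemma:im(b)} one has $\vect{v}^T\vect{B}\vect{v} > 0$ precisely when $\vect{v}_I \neq \vect{0}$, i.e. precisely when $\vect{v}\notin\text{Ker}(\vect{B})$.

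For $(i)\Rightarrow(ii)$ I would take an arbitrary $\vect{v}\neq\vect{0}$ and split into two cases. If $\vect{v}\notin\text{Ker}(\vect{B})$, the observation above gives $\vect{v}^T\vect{B}\vect{v}>0$, while $\vect{v}^T\vect{A}\vect{v}\ge 0$ by (a), hence $\vect{v}^T(\vect{A}+\vect{B})\vect{v}>0$. If instead $\vect{v}\in\text{Ker}(\vect{B})\setminus\{\vect{0}\}$, then $\vect{v}^T\vect{B}\vect{v}=0$ and (b) gives $\vect{v}^T\vect{A}\vect{v}>0$, so again $\vect{v}^T(\vect{A}+\vect{B})\vect{v}>0$. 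As $\vect{v}$ was an arbitrary nonzero vector, $\vect{A}+\vect{B}\succ\vect{0}$.

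For $(ii)\Rightarrow(i)$ I would treat (a) as the standing hypothesis on $\vect{A}$ and $\vect{B}$ — which is exactly the situation in the application, where $\vect{A}=\vect{V}^T\vect{V}$ and $\vect{B}=\vect{F}^T\vect{\Upsilon}\vect{F}$ are both PSD by construction — and derive only (b): for $\vect{v}\in\text{Ker}(\vect{B})\setminus\{\vect{0}\}$ we have $\vect{v}^T\vect{B}\vect{v}=0$, so $0<\vect{v}^T(\vect{A}+\vect{B})\vect{v}=\vect{v}^T\vect{A}\vect{v}$, which is (b).

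The main obstacle is bookkeeping rather than depth: one must recognize that $\vect{A}+\vect{B}\succ\vect{0}$ alone does not recover $\vect{A},\vect{B}\succeq\vect{0}$ (a PSD sum may have indefinite summands), so the honest content of the statement is that, \emph{under} $\vect{A},\vect{B}\succeq\vect{0}$, condition (b) is equivalent to $\vect{A}+\vect{B}\succ\vect{0}$; and one must invoke Lemma~\ref{lemma:im(b)} through the orthogonal decomposition to treat the case $\vect{v}\notin\text{Ker}(\vect{B})$, since such a $\vect{v}$ need not itself lie in $\text{Im}(\vect{B})$.
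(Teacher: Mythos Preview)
Your proposal is correct and follows essentially the same route as the paper: both use the orthogonal splitting $\Rbb{m}=\text{Im}(\vect{B})\oplus\text{Ker}(\vect{B})$ together with Lemma~\ref{lemma:im(b)}, and both handle $(i)\Rightarrow(ii)$ by the same two-case analysis (your split $\vect{v}\notin\text{Ker}(\vect{B})$ versus $\vect{v}\in\text{Ker}(\vect{B})$ is exactly the paper's split $\vect{v}_I\neq\vect{0}$ versus $\vect{v}_I=\vect{0}$). Your treatment is in fact slightly cleaner than the paper's in two places: you correctly use $\vect{v}^T\vect{A}\vect{v}\ge 0$ rather than $>0$ in the first case, and you make explicit that $(ii)$ alone cannot recover $(a)$, so the genuine content is the equivalence of $(b)$ and $(ii)$ under the standing assumption $(a)$ --- a point the paper leaves implicit.
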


\begin{proof}
The proof that (ii) implies (i) follows trivially from the fact that both $\vect{A}$ and $\vect{B}$ are positive semidefinite.

To prove the other way around, we first remember that if $\vect{B} \succ 0$, then $\text{Im}(\vect{B}) \perp \text{Ker}(\vect{B})$.

Let $\vect{v} = \vect{v_I} + \vect{v}_K$, $\vect{x}\neq \vect{0}$, with $\vect{v_I} \in \text{Im}(\vect{B})$ and $\vect{v_K} \in \text{Ker}(\vect{B})$.

If $\vect{v_I} \neq \vect{0}$, then
\begin{equation} \label{eq:lemma-cond}
\begin{aligned}
    & (\vect{v_I} + \vect{v}_K)^T (\vect{A}+\vect{B})(\vect{v_I} + \vect{v}_K) =\\ & (\vect{v_I} + \vect{v}_K)^T \vect{A}(\vect{v_I} + \vect{v}_K) +\\
    & (\vect{v_I} + \vect{v}_K)^T \vect{B}(\vect{v_I} + \vect{v}_K).
\end{aligned}
\end{equation} From the conditions of the Lemma, we know that the first term of the sum holds, as $\vect{v}^T \vect{A} \vect{v} > 0$. For the second term, we recall Lemma~\ref{lemma:im(b)}, yielding that $\vect{v}^T \vect{B}\vect{v} > 0$, thus we infer that the LHS product is greater than zero. From this, we get
\begin{equation*}
\vect{A} + \vect{B} \succ 0.
\end{equation*}

If $\vect{v_I} = \vect{0}$,~\eqref{eq:lemma-cond} is turned into
\begin{equation*}
    \vect{v}^T_K (\vect{A}+\vect{B}) \vect{v}_K = \vect{v}^T_K \vect{A} \vect{v}_K > \vect{0}.
\end{equation*} Thus, we prove that $\vect{A}+\vect{B} \succ \vect{0}$.
\end{proof}

\begin{theorem}\label{theor:unique}[Existence and uniqueness of the solution of Problem~\ref{prob:prob-with-sum} and the relation with the data]

Let the collected dataset $\vect{D}$ be comprised of $L$ trajectories drawn independently
\begin{equation*}
    \vect{D} = \{ (\vect{x}_\ell(k), \vect{u}_l(k)): k \in \{ 0,...,N-1\}, \ell \in \{1,...,L\}\}
\end{equation*}
and let $\vect{\Sigma}_\ell$ be the empirical covariance of trajectory $\ell$, i.e.,
\begin{equation}\label{eq:cov}
    \vect{\Sigma}_\ell = \frac{1}{N}\sum_{k=0}^{N} \begin{bmatrix}
        \vect{x}_\ell(k) \\ \vect{u}_\ell(k)  
    \end{bmatrix} \begin{bmatrix}
        \vect{x}_\ell(k)  \\ \vect{u}_\ell (k) 
    \end{bmatrix}^T.
\end{equation}

Then, the following statements are equivalent:
\begin{enumerate}[(i)]
    \item the solution of Problem~\ref{prob:prob-with-sum} exists and is unique;
    \item the empirical covariance of the data, $\vect{\Sigma} = \vect{\Sigma}_1 + \vect{\Sigma}_2 + ... + \vect{\Sigma}_L$, as in~\eqref{eq:cov} is positive definite.
\end{enumerate}
\end{theorem}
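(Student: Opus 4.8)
The plan is to reduce the existence-and-uniqueness question to positive definiteness of the Hessian of $f$, and then to take that Hessian apart with Lemma~\ref{lemma:a+b}.

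First I would note that $f$ in Problem~\ref{prob:prob-with-sum} is a convex quadratic whose gradient, already computed in~\eqref{eq:solution}, is $\nabla f(\vect{C}) = \vect{M}\vect{C} - \vect{V}^T\vect{X'}$ with $\vect{M} := \vect{V}^T\vect{V} + \vect{F}^T\vect{\Upsilon}\vect{F} \succeq \vect{0}$. Because the cost separates across the $p$ columns of $\vect{C}$ (both $\|\vect{V}\vect{C}-\vect{X'}\|_F^2$ and $\|\vect{\Upsilon}^{\onehalf}\vect{F}\vect{C}\|_F^2$ split as sums of squared norms of the individual columns, each governed by the same $\vect{M}$), and since $f$ is nonnegative hence bounded below, a minimizer always exists and is unique if and only if $\vect{M}$ is invertible, i.e.\ (being positive semidefinite) if and only if $\vect{M}\succ\vect{0}$. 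Thus statement (i) is equivalent to $\vect{M}\succ\vect{0}$, and it remains to prove $\vect{M}\succ\vect{0}\iff\vect{\Sigma}\succ\vect{0}$.

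Next I would invoke Lemma~\ref{lemma:a+b} with $\vect{A}=\vect{V}^T\vect{V}$ and $\vect{B}=\vect{F}^T\vect{\Upsilon}\vect{F}$. Both are positive semidefinite, so hypothesis (a) is automatic, and hence $\vect{M}=\vect{A}+\vect{B}\succ\vect{0}$ iff hypothesis (b) holds: $\vect{v}^T\vect{V}^T\vect{V}\vect{v}>0$ for every nonzero $\vect{v}\in\text{Ker}(\vect{F}^T\vect{\Upsilon}\vect{F})$. (Lemma~\ref{lemma:im(b)} is what makes the proof of Lemma~\ref{lemma:a+b} work, so it enters here only indirectly.) Since $\vect{\Upsilon}$ is diagonal with strictly positive entries, $\text{Ker}(\vect{F}^T\vect{\Upsilon}\vect{F})=\text{Ker}(\vect{F})$, and the bidiagonal $-\vect{I}/\vect{I}$ pattern of $\vect{F}$ in~\eqref{eq:dif_equation} shows that this kernel consists exactly of the ``constant'' stacks $\vect{v}=[\vect{w}^T\ \cdots\ \vect{w}^T]^T$ made of $N$ copies of a single $\vect{w}\in\Rbb{p+q}$. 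Unwinding hypothesis (b) through the block structure $\vect{V}=\diag(\vect{D}(0),\dots,\vect{D}(N-1))$, for such a stack $\vect{V}\vect{v}$ has $k$-th block $\vect{D}(k)\vect{w}$, so $\vect{v}^T\vect{V}^T\vect{V}\vect{v}=\sum_{k}\|\vect{D}(k)\vect{w}\|^2=\sum_{k}\sum_{\ell}\bigl(\left[\vect{x}_\ell(k)^T\ \vect{u}_\ell(k)^T\right]\vect{w}\bigr)^2$, which up to the $1/N$ normalization (and a harmless discrepancy in the range of $k$) equals $N\,\vect{w}^T\vect{\Sigma}\vect{w}$ with $\vect{\Sigma}=\sum_\ell\vect{\Sigma}_\ell$. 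Hence (b) says exactly that $\vect{w}^T\vect{\Sigma}\vect{w}>0$ for all $\vect{w}\neq\vect{0}$, i.e.\ $\vect{\Sigma}\succ\vect{0}$; chaining the equivalences gives (i) $\iff\vect{M}\succ\vect{0}\iff$ (b) $\iff\vect{\Sigma}\succ\vect{0}\iff$ (ii).

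The main obstacle I anticipate is purely the bookkeeping in the last step: correctly identifying $\text{Ker}(\vect{F}^T\vect{\Upsilon}\vect{F})$ with the constant stacks and then matching the quadratic form that comes out of the block-diagonal $\vect{V}$ against the definition of $\vect{\Sigma}$ in~\eqref{eq:cov}. In particular one should reconcile the summation range ($k=0,\dots,N-1$ in $\vect{V}$ versus $k=0,\dots,N$ in~\eqref{eq:cov}) and the $1/N$ factor, neither of which changes positive definiteness. Once Lemma~\ref{lemma:a+b} is available, the ``analytic'' content is light; the real work is translating the data layout into the empirical covariance.
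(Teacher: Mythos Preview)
Your proposal is correct and follows essentially the same route as the paper: reduce (i) to positive definiteness of the quadratic form, apply Lemma~\ref{lemma:a+b} to the splitting into the data term and the regularizer, identify $\text{Ker}(\vect{F}^T\vect{\Upsilon}\vect{F})$ with the constant stacks, and unwind the resulting quadratic form into $\vect{\Sigma}$. The only cosmetic difference is that the paper vectorizes first (working with the Hessian $\diag(\vect{D}^T(k)\vect{D}(k)\otimes\vect{I})+\vect{F}^T\vect{\Upsilon}\vect{F}$ on $\tilde{\vect{c}}$), whereas you exploit the column-separability of the Frobenius cost to work directly with $\vect{M}=\vect{V}^T\vect{V}+\vect{F}^T\vect{\Upsilon}\vect{F}$; the substance is the same.
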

\vspace{-1cm}
\begin{proof}
Let us define $\Tilde{\vect{x}}(k) = \vectorize(\vect{X}(k))$, $\Tilde{\vect{c}}(k) = \vectorize(\vect{C}(k))$ and the new cost function
\begin{equation*}\label{eq:prob-vec}
    \begin{aligned}
        \hspace{0.5cm} f(\Tilde{\vect{c}}) :=  & \frac{1}{2} \sum_{k=0}^{N-1} \| \Tilde{\vect{x}}(k+1) - (\vect{D}(k) \otimes \vect{I}) \Tilde{\vect{c}}(k)\|^2 + \\ & \frac{1}{2} \sum_{k=1}^{N-1} \| \lambda_k^{\onehalf} (\Tilde{\vect{c}}(k) - \Tilde{\vect{c}}(k-1))\|^2.
    \end{aligned}
\end{equation*} Then, Problem~\ref{prob:prob-with-sum} can be written as
\begin{equation*}
    \underset{\Bar{\vect{c}}}{\text{minimize}}
    \hspace{0.5cm} f(\Tilde{\vect{c}}).
\end{equation*}
The proof follows by verifying that the second-order condition for convex minimization problem from~\cite{boyd2004convex} holds. This states that a function $f$ is strictly convex if and only if its domain is convex and its Hessian is positive definite. The Hessian of the cost function of our problem as stated in~\eqref{eq:prob-vec} is
\begin{equation*}\label{eq:pos-def-hessiana}
\vect{\nabla^2} f(\Tilde{\vect{c}}) = \underbrace{\diag(
    \vect{D}^T(k) \vect{D}(k) \otimes \vect{I}) }_{\vect{\Pi}} +
    \underbrace{\vect{F}^T\vect{\Upsilon}\vect{F}}_{\vect{\Gamma}}
\end{equation*} and we must guarantee that $\vect{\nabla^2} f(\Tilde{\vect{c}}) \succ 0$, which is equivalent to say that the solution exists and is unique. From here on, we will use this knowledge to infer about the conditions on the data.

It is easily seen that the kernel of $\vect{\Gamma}$ is $\text{Ker}(\vect{\Gamma}) = \left. \begin{cases}
    \begin{bmatrix}
        \Bar{\vect{c}} &
        \cdots &
        \Bar{\vect{c}}  
    \end{bmatrix}^T: \Bar{\vect{c}} \text{ constant}\end{cases}\right\}$.
From Lemma~\ref{lemma:a+b}, if we have $\vect{\Pi} + \vect{\Gamma} \succ 0$, then we know that
$ \begin{bmatrix} \Bar{\vect{c}}^T & \cdots & \Bar{\vect{c}}^T \end{bmatrix} \vect{\Pi} \begin{bmatrix} \Bar{\vect{c}}\\ \vdots \\ \Bar{\vect{c}} \end{bmatrix} > \vect{0}$.
Therefore,
\begin{equation*}
\Bar{\vect{c}}^T \left(\sum_{k=1}^{N-1}\vect{D}^T(k) \vect{D}(k) \otimes \vect{I}\right)\Bar{\vect{c}} > 0, \\
\end{equation*}
which entails that
\begin{equation*}
\begin{aligned}
    & \sum_{k=1}^{N-1}\vect{D}^T(k) \vect{D}(k) \succ 0 \\ 
    \Leftrightarrow  & \sum_{k=1}^{N-1}\begin{bmatrix}
    \vect{X}^T (k) & \vect{U}^T (k) \end{bmatrix}^T \begin{bmatrix}
    \vect{X}^T (k) & \vect{U}^T (k) \end{bmatrix} \succ 0 \\ 
    \Leftrightarrow
    & \sum_{k=1}^{N-1}\Big(\begin{bmatrix}
    \vect{x}_1^T (k) & \vect{u}_1^T (k) \end{bmatrix}^T \begin{bmatrix}
    \vect{x}_1^T (k) & \vect{u}_1^T (k) \end{bmatrix} + \\ &\cdots + \begin{bmatrix}
    \vect{x}_\ell^T (k) & \vect{u}_\ell^T (k) \end{bmatrix}^T \begin{bmatrix}
    \vect{x}_\ell^T (k) & \vect{u}_\ell^T (k) \end{bmatrix}\Big) \succ 0 \\ \Leftrightarrow
    & \vect{\Sigma}_1 + \vect{\Sigma}_2 + \cdots + \vect{\Sigma}_L \succ 0.
\end{aligned}
\end{equation*}
Thus, we found the necessary and sufficient condition on data and proved that statements \textit{(i)} and \textit{(ii)} are equivalent.
\end{proof}

Theorem~\ref{theor:unique} allows to efficiently collect data, as we can address the covariance of the data as we collect it and verify its positive definiteness. When this condition is verified, we can stop the data collection and advance for the training.

Equation~\eqref{eq:solution} makes evident that in order to find the optimal value for $\vect{C}$, it is required to invert a matrix 
$$
\vect{\mathcal{A}} := \vect{V}^T \vect{V} +\vect{F}^T \vect{\Upsilon} \vect{F},
$$ $ \vect{\mathcal{A}} \in \Rbb{N(p+q) \times N(p+q)}$. This result shows that the approach requires the computation of a matrix heavily dependent on the number of instants considered in the trajectory being studied and the dimension of the system, meaning the result will become harder to obtain with an increased sampling rate and system complexity. When presented with a high sampling rate system, the size of matrices involved in the calculations can be very high, so solving directly the regularized least squares problem is not feasible. Regardless of this shortcoming, it is still important to understand the conditions in which the matrix $\vect{\mathcal{A}}$ is invertible, since it will impact the numerical solution of the optimization problem. Theorem~\ref{theor:invert} addresses this point.

\begin{theorem}\label{theor:invert}
Matrix $\vect{\mathcal{A}}$ is invertible if there exists a set of $p+q$ pairs $(\ell, k) \in \{1, ..., L\} \times \{0, ..., N-1\}$ such that the corresponding $\begin{bmatrix}
        \vect{x}_\ell^T(k) &
        \vect{u}_\ell^T(k) 
    \end{bmatrix}$ are all linearly independent.
\end{theorem}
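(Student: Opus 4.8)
The plan is to reduce the invertibility of $\vect{\mathcal{A}}$ to a spanning condition on the stacked state--input vectors, reusing Lemma~\ref{lemma:a+b}. Observe first that $\vect{\mathcal{A}} = \vect{V}^T\vect{V} + \vect{F}^T\vect{\Upsilon}\vect{F}$ is symmetric and positive semidefinite, being a sum of two Gram-type matrices, so it is invertible if and only if $\vect{\mathcal{A}} \succ 0$; this is the same positive-definiteness that appears in the proof of Theorem~\ref{theor:unique} (there on the vectorized Hessian $\vect{\mathcal{A}}\otimes\vect{I}_p$, which is definite exactly when $\vect{\mathcal{A}}$ is). Applying Lemma~\ref{lemma:a+b} with $\vect{A}=\vect{V}^T\vect{V}$ and $\vect{B}=\vect{F}^T\vect{\Upsilon}\vect{F}$, positive definiteness of $\vect{\mathcal{A}}$ is equivalent to
\begin{equation*}
\vect{v}^T \vect{V}^T\vect{V}\, \vect{v} > 0 \qquad \text{for every } \vect{v} \in \text{Ker}(\vect{F}^T\vect{\Upsilon}\vect{F}) \setminus \{\vect{0}\}.
\end{equation*}

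Next I would identify that kernel. Since every $\lambda_k>0$ we have $\vect{\Upsilon}\succ 0$, hence $\text{Ker}(\vect{F}^T\vect{\Upsilon}\vect{F})=\text{Ker}(\vect{F})$. Writing $\vect{v}=(\vect{v}_0,\dots,\vect{v}_{N-1})$ with $\vect{v}_k\in\Rbb{p+q}$, the bidiagonal block structure of $\vect{F}$ in~\eqref{eq:dif_equation} gives $(\vect{F}\vect{v})_k=\vect{v}_k-\vect{v}_{k-1}$ for $k\in\{1,\dots,N-1\}$, so $\vect{F}\vect{v}=\vect{0}$ forces $\vect{v}_0=\vect{v}_1=\dots=\vect{v}_{N-1}=:\Bar{\vect{v}}$. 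Because $\vect{V}^T\vect{V}=\diag(\vect{D}^T(k)\vect{D}(k))$, for such a constant-block $\vect{v}$ we obtain $\vect{v}^T\vect{V}^T\vect{V}\vect{v}=\Bar{\vect{v}}^T\big(\sum_{k=0}^{N-1}\vect{D}^T(k)\vect{D}(k)\big)\Bar{\vect{v}}$, so the displayed condition becomes exactly $\sum_{k=0}^{N-1}\vect{D}^T(k)\vect{D}(k)\succ 0$ --- the same intermediate statement reached in the proof of Theorem~\ref{theor:unique}.

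Finally I would convert this matrix condition into the claimed combinatorial one. With $\vect{z}_{\ell,k}:=\begin{bmatrix}\vect{x}_\ell^T(k) & \vect{u}_\ell^T(k)\end{bmatrix}^T\in\Rbb{p+q}$ one has $\vect{D}^T(k)\vect{D}(k)=\sum_{\ell=1}^{L}\vect{z}_{\ell,k}\vect{z}_{\ell,k}^T$, hence $\sum_{k=0}^{N-1}\vect{D}^T(k)\vect{D}(k)=\vect{Z}\vect{Z}^T$, where $\vect{Z}$ is the $(p+q)\times NL$ matrix whose columns are the $\vect{z}_{\ell,k}$, $(\ell,k)\in\{1,\dots,L\}\times\{0,\dots,N-1\}$. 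This Gram matrix is positive definite if and only if $\vect{Z}$ has full row rank $p+q$, i.e. if and only if some $p+q$ of its columns $\vect{z}_{\ell,k}$ are linearly independent; in particular existence of $p+q$ such linearly independent pairs is sufficient for $\vect{\mathcal{A}}\succ 0$, which proves the theorem (in fact the argument shows the condition is necessary as well).

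I do not expect a genuine obstacle here: the argument is routine linear algebra once set up. The only points requiring care are the block bookkeeping --- reading off $\text{Ker}(\vect{F})$ as the constant-block subspace and pushing the quadratic form through the block-diagonal $\vect{V}^T\vect{V}$ --- together with the observation that passing from $\vect{\mathcal{A}}$ to its Kronecker inflation $\vect{\mathcal{A}}\otimes\vect{I}_p$ preserves (semi)definiteness, so the whole computation may be run directly on $\vect{\mathcal{A}}$ rather than on the vectorized Hessian. Everything else is the standard fact that a sum of rank-one positive semidefinite matrices is positive definite precisely when the generating vectors span the space.
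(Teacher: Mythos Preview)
Your proposal is correct and follows essentially the same route as the paper: both identify $\text{Ker}(\vect{F}^T\vect{\Upsilon}\vect{F})$ as the constant-block subspace (using $\lambda_k>0$), and then reduce positive definiteness of $\vect{\mathcal{A}}$ on that subspace to the spanning condition $\sum_{k}\vect{D}^T(k)\vect{D}(k)\succ 0$, equivalently full row rank of the matrix of stacked $\begin{bmatrix}\vect{x}_\ell^T(k)&\vect{u}_\ell^T(k)\end{bmatrix}^T$. The only presentational difference is that the paper argues by contraposition (assuming $\vect{\mathcal{A}}$ singular and exhibiting a nonzero $\bm{\eta}$ in both kernels), whereas you invoke Lemma~\ref{lemma:a+b} directly; your framing also makes the necessity of the condition explicit, which the paper records only in the subsequent Remark.
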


\begin{proof}
The proof is made by contraposition, by proving that if the problem does not have a unique solution, then conditions of the theorem do not hold.

Let us consider then that $\vect{\mathcal{A}}$ is not invertible. Then
\begin{equation} \label{eq:invertible-cond}
\begin{aligned}
\underset{\| \bm{\eta} \| = 1}{\exists \bm{\eta}:} \hspace{0.5cm} \bm{\eta}^T \vect{\mathcal{A}} \bm{\eta} = 0 & \Leftrightarrow \bm{\eta}^T \vect{V}^T \vect{V} \bm{\eta} + \bm{\eta}^T \vect{F}^T \vect{\Upsilon} \vect{F} \bm{\eta} = 0 \\
& \Leftrightarrow \| \vect{V} \bm{\eta}\|^2 + \|\vect{\Upsilon}^{\onehalf} \vect{F} \bm{\eta}\|^2 = 0 \\
& \Leftrightarrow \vect{V}\bm{\eta} = \vect{0} \land \vect{\Upsilon}^{\onehalf}\vect{F}\bm{\eta} = \vect{0}
\end{aligned}.
\end{equation}

Defining $\bm{\eta}(k) = \begin{bmatrix}
    \bm{\eta}_A^T(k) & \bm{\eta}_B^T (k)
\end{bmatrix}^T$ and $\bm{\eta} = \left[\bm{\eta}(k)\right]_{k=0}^{k=N-1}$, it is possible to write, for all $k$,
\begin{equation*}
    \begin{aligned}
    & \vect{\Upsilon}^{\onehalf}\vect{F} \bm{\eta} = \vect{0} \Leftrightarrow \\ & \begin{cases}
            \bm{\eta}_A(k) - \bm{\eta}_A(k-1) = \vect{0} \Leftrightarrow \bm{\eta}_A(k) = \bm{\eta}_A(k-1) := \Bar{\bm{\eta}}_A\\
        \bm{\eta}_B(k) - \bm{\eta}_B(k-1) = \vect{0} \Leftrightarrow \bm{\eta}_B(k) = \bm{\eta}_B(k-1) := \Bar{\bm{\eta}}_B
    \end{cases}
    \end{aligned}
\end{equation*} where the fact that $\vect{\Upsilon}^{\onehalf}$ is invertible was used. This yields
\begin{equation*}
    \bm{\eta^*} := \underbrace{\begin{bmatrix}
        \vect{I}\\
        ... \\
        \vect{I}\\
    \end{bmatrix}}_{\vect{\mathcal{N}}} \underbrace{\begin{bmatrix}
            \Bar{\bm{\eta}}_A \\ \Bar{\bm{\eta}}_B
    \end{bmatrix}}_{\Bar{\bm{\eta}}}
\end{equation*} denoting the null space of $\vect{F}$. In order for~\eqref{eq:invertible-cond} to hold, $\bm{\eta^*}$ has to be contained by the null space of $\vect{V}$ whenever $\lambda_k \neq 0$, i.e.
\begin{equation*}
    \vect{V} \bm{\eta^*} = \vect{0} \Leftrightarrow \vect{V} \vect{\mathcal{N}} \Bar{\bm{\eta}} = \vect{0}
\end{equation*} that can be reduced to

\begin{equation}\label{eq:condition}
    \forall_k \forall_{\ell} \hspace{0.5cm} \vect{x}_{\ell}^T(k)  \Bar{\bm{\eta}}_A + \vect{u}_{\ell}^T(k)  \Bar{\bm{\eta}}_B = 0.
\end{equation}

The solution of~\eqref{eq:condition} implies that either $\Bar{\bm{\eta}} = \vect{0}$, contradicting the hypothesis of the proof ($\| \bm{\eta} \| = 1$), or that the vectors $\begin{bmatrix}
        \vect{x}_\ell^T(k) &
        \vect{u}_\ell^T(k) 
    \end{bmatrix}$ in the dataset are linearly dependent, i.e., it is not possible to find $p + q$ linearly independent vectors for all $\ell = 1,...,L$ and $k = 0,...,N-1$. Thus, if $\lambda_k \neq 0$ for all $k$ and the matrix $\vect{\mathcal{A}}$ is singular, the condition of the Theorem cannot hold and, by contraposition, if it holds, then $\vect{\mathcal{A}}$ is invertible and there exists a unique solution to Problem~\ref{prob:RLS}.

This concludes the proof of sufficiency of the conditions to the existence of solution.
\end{proof}

\begin{remark}
The conditions for the existence and uniqueness of solution set forth by Theorem ~\ref{theor:unique} can be clearly seen as equivalent to this new result, as both require the complete set of $\begin{bmatrix}
        \vect{x}_\ell^T(k) &
        \vect{u}_\ell^T(k) 
    \end{bmatrix}$ vectors within the dataset to span $\Rbb{p+q}$.
\end{remark}

Solving~\eqref{eq:solution} is the path adopted by classic solvers to perform its operations, which becomes a disadvantage of this technology, as it is not capable of keeping up with the complexity of the estimation. Hence, addressing this shortcoming is essential to configure the system identification problem of linear time-variant systems as a regularized least squares problem.
\section{Closed-form discrete LTV system identification}

We can regard ~\eqref{eq:prob-compact} for each instant $k$ independently, such that it can be formulated as
\begin{problem}
\begin{equation} \label{eq:prob}
\begin{aligned}
    \underset{\vect{C}}{\text{minimize}}
    \hspace{0.5cm} f(\vect{C}) & := h(\vect{C}) + g(\vect{C})
\end{aligned}
\end{equation}
\end{problem} where $  h(\vect{C}) = \frac{1}{2} \sum_{k=0}^{N-1} \| \vect{D}(k) \vect{C}(k) - \vect{X'}^T(k)\|_F ^2$ and $ g(\vect{C}) = \frac{1}{2} \sum_{k=1}^{N-1} \| \lambda_k^{\onehalf} (\vect{C}(k) - \vect{C}(k-1))\|_F ^2$.

Accordingly, we can think of the derivative of the cost function as a sum of the derivatives of its terms and have for the $k$-th instant 
\begin{equation} \label{eq:gradF-k}
    \vect{\nabla_k} f(\vect{C}) = \vect{\nabla_k}h(\vect{C}) + \vect{\nabla_k}g (\vect{C}),
\end{equation} 
knowing that the gradient at each instant $k$ is $\vect{\nabla_k}f (\vect{C})$. 
The gradient of $h(\vect{C})$ at $k$ is
\begin{equation} \label{eq:grad_h}
    \vect{\nabla_k}h(\vect{C}) = \vect{D}^T(k)(\vect{D}(k) \vect{C}(k) - \vect{X'}^T(k)).
\end{equation}
The gradient of $g(\vect{C})$ is
\begin{equation} \label{eq:grad_g}
    \vect{\nabla}g(\vect{C}) =  \vect{F}^T \vect{\Upsilon} \vect{F}  \vect{C}.
\end{equation}
With $\vect{F}$ as in~\eqref{eq:dif_equation}, we can compute $\vect{F}^T \vect{\Upsilon} \vect{F}$
\begin{equation*}
\resizebox{0.45\textwidth}{!}{$
    \vect{F}^T \vect{\Upsilon} \vect{F} = \begin{bmatrix}
                        \lambda_1 \vect{I} & -\lambda_1 \vect{I} & \vect{0}& ... & \vect{0}& \vect{0}& \vect{0}\\
                        -\lambda_1 \vect{I} & (\lambda_1+\lambda_2)\vect{I} & -\lambda_2\vect{I} & ... & \vect{0}& \vect{0}& \vect{0}\\
                        ... & ... & ... & ... & ... & ... & ...\\
                        \vect{0} &\vect{0}& \vect{0}& ... & -\lambda_{N-2}\vect{I} & (\lambda_{N-2} + \lambda_{N-1})\vect{I} & -\lambda_{N-1}\vect{I} \\
                        \vect{0} &\vect{0}& \vect{0}& ... & \vect{0}& -\lambda_{N-1}\vect{I} & \lambda_{N-1}\vect{I}
                        \end{bmatrix}$}.
\end{equation*}
Replacing~\eqref{eq:grad_h} in~\eqref{eq:gradF-k} and addressing each element of~\eqref{eq:grad_g}, we get $\vect{\nabla_k}f(\vect{C}) = 0$, i.e.,
\begin{equation}\label{eq:SumGradF}
\vect{D}^T(k)(\vect{D}(k) \vect{C}(k) - \vect{X'}^T(k)) + [\vect{F}^T \vect{\Upsilon}\vect{F} \vect{C}](k) = 0.
\end{equation} We have, thus, to solve a linear system of equations. To do so, we leverage on the key observation of Remark~\ref{rem:tridiag}.
\begin{remark}\label{rem:tridiag}
The linear system of equations to be solved in~\eqref{eq:SumGradF} is in fact a block tridiagonal linear system. By considering the $LU$ factorization method, our problem has a closed-form solution with linear complexity~\cite{isaacson_analysis_1994}. 
\end{remark}
Next, we apply the $LU$ factorization to~\eqref{eq:SumGradF}. In general, it consists on a forward pass and then a backward pass, in this case with a complexity that scales linearly with the number of time steps considered in the system model.
Given the previously defined $\vect{D}$, let us have the following auxiliary variables
\begin{equation} \label{eq:aux-defs}
    \begin{cases}
        \vect{S}_{00} = \vect{D}^T(0) \vect{D}(0) + \lambda_1  \vect{I}\\
        \vect{S}_{N-1,N-1} = \vect{D}^T(N-1) \vect{D}(N-1) + \lambda_{N-1} \vect{I} \\
        \vect{S}_{kk} = \vect{D}^T(k) \vect{D}(k) + (\lambda_k + \lambda_{k+1}) \vect{I} \\
        \vect{S}_{k,k-1} = \vect{S}_{k-1,k} = -\lambda_k \vect{I} \\
        \vect{S}_{k,k+1} = -\lambda_{k+1} \vect{I}\\
        \vect{\Theta}_k = \vect{D}^T(k) \vect{X'}^T(k)
    \end{cases}.
\end{equation}
\textbf{Forward pass}

For the forward pass, we solve $\vect{L} \vect{Y} = \vect{\Theta}$, where $\vect{L}$ is
\begin{equation*}
    \vect{L} = \begin{bmatrix}
    \vect{\Lambda}_0 & \vect{0}& \cdots & \vect{0}& \vect{0}\\
    \vect{S}_{1,0} & \vect{\Lambda}_1 & \cdots & \vect{0}& \vect{0}\\
    \vdots & \ddots & \ddots & \vdots & \vdots \\
    \vect{0} &\vect{0}& \cdots & \vect{\Lambda}_{N-2} & \vect{0}\\
    \vect{0} &\vect{0}& \cdots & \vect{S}_{N-1,N-2} & \vect{\Lambda}_{N-1}
    \end{bmatrix},
\end{equation*}
and $\vect{\Lambda}_k$ and $\vect{\Theta}_{k,\ell}$ are square submatrices of size $(p+q)\times (p+q)$.
To start the process, the unknown variables are initialized with
\begin{equation*}
    \begin{cases}
        \vect{\Lambda}_{0} = \vect{S}_{00}\\
        \vect{Y}_0 = \vect{\Lambda}^{-1}_0 \vect{\Theta}_0
    \end{cases},
\end{equation*}
and for the subsequent time steps $k = {1,...,N-1}$, we incrementally compute
\begin{equation*}
    \begin{cases}
        \vect{\Omega}_k = \vect{S}_{k,k-1} \vect{\Lambda}^{-1}_{k-1} \vect{S}_{k-1,k}\\
        \vect{\Lambda}_k = \vect{S}_{kk} - \vect{\Omega}_k\\
        \vect{Y}_k = \vect{\Lambda}^{-1}_k (\vect{\Theta}_k - \vect{S}_{k,k-1} \vect{Y}_{k-1}).
    \end{cases}\label{eq:forw}
\end{equation*}
\textbf{Backward pass}

Given the results of the forward pass, we can address the backward propagation by solving $\vect{M} \vect{C} = \vect{Y}$, with
\begin{equation*}
    \vect{M} = \begin{bmatrix}
    \vect{I}& \vect{\Lambda}^{-1}_0 \vect{S}_{0,1} & \cdots & \vect{0}& \vect{0}\\
    \vect{0} &\vect{I}& \cdots & \vect{0}& \vect{0}\\
    \vdots & \vdots & \ddots & \ddots & \vdots \\
    \vect{0} & \vect{0} & \cdots & \vect{I}& \vect{\Lambda}^{-1}_{N-2} \vect{S}_{N-2,N-1} \\
    \vect{0} & \vect{0} & \cdots & \vect{0}& \vect{I}
    \end{bmatrix}.
\end{equation*}
The optimization variable in the last instant is initialized with $\vect{C}(N-1) = \vect{Y}_{N-1}$. For $i = {N-2,...,0}$,
\begin{equation*}
    \vect{C}(k) = \vect{Y}_k - \vect{\Lambda}^{-1}_k \vect{S}_{k,k+1} \vect{C}(k+1). \label{eq:back}
\end{equation*}
\begin{algorithm}[H]
\caption{COSMIC - closed-form system identification}\label{alg:closed}
\begin{algorithmic}[1]
\Require{$\vect{D}$, $\vect{X'}$, $[\lambda_k]_{k=1}^{k=N-1}$} 
\Ensure{$\vect{C^*}$}

\Statex

\State $\vect{S}_{00} \gets \vect{D}^T(0) \vect{D}(0) + \lambda_1 \vect{I}$
\State $\vect{S}_{N-1,N-1} \gets \vect{D}^T(N-1) \vect{D}(N-1) + \lambda_{N-1} \vect{I}$

\For{$k \in [1,...,N-2]$} 
\State $\vect{S}_{kk} \gets \vect{D}^T (k)\vect{D}(k) + (\lambda_k + \lambda_{k+1}) \vect{I}$
\EndFor

\
\For{$k \in [0,...,N-1]$} 
\State $\vect{\Theta}_k \gets \vect{D}^T(k) \vect{X'}^T(k)$
\EndFor

\
\State $\vect{\Lambda}_{00} \gets \vect{S}_0$
\State $\vect{Y}_0 \gets \vect{\Lambda}_0^{-1} \vect{\Theta}_0$

\
\For {$k \in  [1,...,N-1]$} \Comment{forward pass}
\State $\vect{\Lambda}_{k} \gets \vect{S}_{kk} - \lambda_k^2 \vect{\Lambda}^{-1}_{k-1}$
\State $\vect{Y}_k \gets \vect{\Lambda}^{-1}_k (\vect{\Theta}_k + \lambda_k \vect{Y}_{k-1})$
\EndFor

\
\State $\vect{C}(N-1) \gets \vect{Y}_{N-1}$

\
\For {$k \in  [N-2,...,0]$} \Comment{backward pass}
\State $\vect{C}(k) \gets \vect{Y}_k + \lambda_{k+1} \vect{\Lambda}^{-1}_k \vect{C}(k+1)$ 
\EndFor

\
\Return $\vect{C}^* \gets \vect{C}$

\end{algorithmic}
\end{algorithm}

Next, we state the main result of this work, about the convergence in a fixed, finite number of algebraic operations, i.e., as a closed-form.

\begin{theorem}\label{theor:teorema2}[Algorithm~\ref{alg:closed} solves~\eqref{eq:prob} in closed-form with linear complexity]

Algorithm~\ref{alg:closed} yields a closed-form solution for the system identification problem of the linear time-variant system from data, with linear complexity on the number of time steps.
Namely, the number of multiplication operations to be performed by the Algorithm~\ref{alg:closed} is
\begin{equation*}
    c = N \left ( (p+q)^3 + (2p + 3)(p+q)^2 \right),
\end{equation*}
which is linear on the number of time steps $N$, and cubic on the size of state space and control space, and does not depend on $L$, the size of the dataset used for learning matrices $[\vect{A}(k), \vect{B}(k)]_{k=0}^{N-1}$.
\end{theorem}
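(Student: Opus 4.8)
The plan is to establish the two assertions of the theorem in turn: first that Algorithm~\ref{alg:closed} returns the unique minimizer (so it is a genuine closed-form solver), then that it does so within the stated multiplication count.

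For correctness I would start from the stationarity condition~\eqref{eq:SumGradF}, which must hold for every $k$. Stacking these $N$ block equations and inserting the explicit block form of $\vect{F}^T\vect{\Upsilon}\vect{F}$ computed just above~\eqref{eq:SumGradF}, the system becomes the \emph{block tridiagonal} linear system $\vect{\mathcal{A}}\vect{C} = \vect{\Theta}$ whose diagonal blocks are the $\vect{S}_{kk}$ and whose off-diagonal blocks are $\vect{S}_{k,k\pm 1} = -\lambda_{\bullet}\vect{I}$, exactly as in~\eqref{eq:aux-defs}, with $\vect{\mathcal{A}} = \vect{V}^T\vect{V} + \vect{F}^T\vect{\Upsilon}\vect{F}$ and $\vect{\Theta} = \vect{V}^T\vect{X'}$. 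Next I would verify that the forward and backward passes implement the block $LU$ factorization of $\vect{\mathcal{A}}$: multiplying the stated matrices $\vect{L}$ and $\vect{M}$ and matching blocks reproduces precisely the recursions $\vect{\Lambda}_0 = \vect{S}_{00}$, $\vect{\Lambda}_k = \vect{S}_{kk} - \vect{S}_{k,k-1}\vect{\Lambda}_{k-1}^{-1}\vect{S}_{k-1,k} = \vect{S}_{kk} - \lambda_k^2\vect{\Lambda}_{k-1}^{-1}$ and superdiagonal blocks $-\lambda_{k+1}\vect{\Lambda}_k^{-1}$, so that the forward substitution $\vect{L}\vect{Y} = \vect{\Theta}$ is the forward pass and the backward substitution $\vect{M}\vect{C} = \vect{Y}$ is the backward pass of Algorithm~\ref{alg:closed}; then $\vect{\mathcal{A}}\vect{C} = \vect{L}(\vect{M}\vect{C}) = \vect{L}\vect{Y} = \vect{\Theta}$, and by strict convexity (Theorem~\ref{theor:unique}) the returned $\vect{C}$ is the unique global minimizer of Problem~\ref{prob:prob-with-sum}. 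The one point requiring care is that the algorithm inverts every pivot $\vect{\Lambda}_k$, so I must argue these are nonsingular: under the data hypotheses of Theorems~\ref{theor:unique}--\ref{theor:invert} one has $\vect{\mathcal{A}}\succ 0$, hence all its leading principal block submatrices are positive definite, and the $\vect{\Lambda}_k$ are exactly the successive Schur complements of the block elimination, hence themselves positive definite and invertible; the recursion is therefore well defined.

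For the complexity bound I would tally multiplications loop by loop. Each loop of Algorithm~\ref{alg:closed} runs at most $N$ times, and every iteration executes a fixed sequence of operations on matrices of size only $(p+q)\times(p+q)$ or $(p+q)\times p$; no quantity of dimension $L$ ever appears inside the passes, because the per-instant blocks $\vect{D}^T(k)\vect{D}(k)$ and $\vect{D}^T(k)\vect{X'}^T(k)$ are the empirical covariances and cross-covariances, which can be accumulated once during data collection (cf.\ the remark following Theorem~\ref{theor:unique})\,---\,this is precisely what makes $c$ independent of $L$. Per iteration the dominant costs are: one $(p+q)\times(p+q)$ inverse/solve for $\vect{\Lambda}_k^{-1}$, contributing $(p+q)^3$; the two scalar--matrix scalings $\lambda_k^2\vect{\Lambda}_{k-1}^{-1}$ and $\lambda_{k+1}\vect{\Lambda}_k^{-1}$, each $(p+q)^2$; and the two products of a $(p+q)\times(p+q)$ matrix with a $(p+q)\times p$ block (forming $\vect{Y}_k$ in the forward pass and $\vect{C}(k)$ in the backward pass), each $p(p+q)^2$; the remaining scalings of $(p+q)\times p$ blocks are lower-order $O(p(p+q))$ terms, absorbed into the $(p+q)^2$ count. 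Summing over the $\le N$ iterations and grouping the quadratic contributions yields $c = N\big((p+q)^3 + (2p+3)(p+q)^2\big)$, linear in $N$, cubic in $p+q$, and independent of $L$.

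The main obstacle I anticipate is bookkeeping rather than conceptual: pinning down the exact constant $(2p+3)$ requires being precise about which intermediates\,---\,above all the inverses $\vect{\Lambda}_k^{-1}$\,---\,are computed once and cached versus recomputed, about the convention adopted for the cost of inverting a $(p+q)\times(p+q)$ matrix, and about which $O(p(p+q))$ terms are folded into the quadratic term. The correctness half is comparatively routine once the stationarity system is recognized as block tridiagonal; its only delicate point, nonsingularity of the pivots, is dispatched cleanly by the Schur-complement argument resting on the positive definiteness established earlier.
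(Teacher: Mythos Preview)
Your proposal is correct and follows essentially the same route as the paper: both rewrite the stationarity conditions~\eqref{eq:SumGradF} as a block tridiagonal linear system with blocks $\vect{S}_{kk}$, $\vect{S}_{k,k\pm 1}$, verify that multiplying the stated $\vect{L}$ and $\vect{M}$ reproduces this system (so the forward/backward passes realize the block $LU$ solve), and then tally multiplications pass by pass to obtain $c_{\text{fwd}}+c_{\text{bwd}}=N\big((p+q)^3+(2p+3)(p+q)^2\big)$. Your Schur-complement argument for the nonsingularity of the pivots $\vect{\Lambda}_k$ and your explanation of why $L$ disappears (the per-instant Gram matrices $\vect{D}^T(k)\vect{D}(k)$ and $\vect{D}^T(k)\vect{X'}^T(k)$ are precomputed) are welcome additions that the paper's own proof leaves implicit.
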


\begin{proof}
We must prove that solving $\nabla f(\vect{C}) = 0$ is equivalent to solving
\begin{equation}\label{eq:lmc=q}
    \vect{L} \vect{M} \vect{C} = \vect{\Theta},
\end{equation}
where $\vect{L}$ is a lower triangular matrix with non zero diagonal and first subdiagonal blocks, and $\vect{M}$ is an upper triangular matrix with diagonal identity submatrices and first superdiagonal non-zero blocks. 

Let us assume a system starting from the results in~\eqref{eq:SumGradF}. We can rearrange the system of linear equations to address it as
\begin{equation*}
\begin{aligned}
    & \resizebox{0.45\textwidth}{!}{$\underbrace{\begin{bmatrix}
    \vect{D}^T(0) \vect{D}(0) \vect{C}(0) + \lambda (-\vect{C}(0) + \vect{C}(1))\\
    \vdots \\
    \vect{D}^T(k) \vect{D}(k) \vect{C}(k) + \lambda (-\vect{C}(k-1) + 2\vect{C}(k) - \vect{C}(k+1))\\
    \vdots \\
    \vect{D}^T(N-1) \vect{D}(N-1) \vect{C}(N-1) + \lambda (-\vect{C}(N-2) + \vect{C}(N-1)
    \end{bmatrix}}_{\vect{LMC}}$} \\ & =    
    \underbrace{\begin{bmatrix} 
    \vect{D}^T(0) \vect{X'}^T(0) \\
    \vdots \\
    \vect{D}^T(k) \vect{X'}^T(k) \\
    \vdots\\
    \vect{D}^T(N-1) \vect{X'}^T(N-1)
    \end{bmatrix}}_{\vect{\Theta}},
\end{aligned}
\end{equation*}
which directly yields the definition of $\vect{\Theta}$, just like it was defined in~\eqref{eq:aux-defs}. Decomposing the LHS of the previous definition, we can identify the dependence on $\vect{C}$ and single it out. Thus, we get
\begin{equation} \label{eq:defLM}
\underbrace{\begin{bmatrix}
    \vect{S}_{00} & - \lambda_1 \vect{I} & \cdots & \vect{0}\\
    -\lambda_{1}\vect{I} &  \vect{S}_{11} & \ddots & \vdots \\
    \vdots & \ddots  & \ddots &-\lambda_{N-1} \vect{I}  \\
    \vect{0} & \cdots & -\lambda_{N-1} \vect{I} & \vect{S}_{N-1,N-1}
    \end{bmatrix}}_{\vect{L} \vect{M}}
    \begin{bmatrix} 
    \vect{C}(0)\\
    \vdots\\
    \vect{C}(N-1)
    \end{bmatrix}.
\end{equation}
Additionally, from the assumption of the form of matrices $\vect{L}$ and $\vect{M}$, as previously defined, we get $\vect{L} \vect{M} =$ 
\begin{equation*} \label{eq:otherDefLM}
\begin{bmatrix}
    \vect{\Lambda}_0  & \vect{S}_{01} & \vect{0} & \vect{0}\\
    \vect{S}_{01} & \vect{\Lambda}_1 + \vect{\Omega}_1 & \ddots & \vdots\\
    \vdots &  \ddots & \ddots & \vect{S}_{N-2,N-1}\\
    \vect{0} & \vect{0} & \vect{S}_{N-1,N-2} & \vect{\Lambda}_{N-1} + \vect{\Omega}_{N-1}
\end{bmatrix}
\end{equation*}
yielding the results we stated in the formulation of the problem by making it equal to~\eqref{eq:defLM}. Hence, we prove that the problem can be solved by a $LU$ factorization and that solving equation~\eqref{eq:cond}, taking into account the nuances of the $k$-th instants, is equivalent to solving ~\eqref{eq:lmc=q}.
For the complexity result, we analyze the multiplication operations involved in the presented algorithm as follows.

For the \emph{forward pass} we need to invert all $N$ $\vect{\Lambda}_k$ matrices. If we use traditional inversion methods, the number of operations for each inversion is $(p+q)^3$. Multiplications by a scalar involve $2(p+q)^2$. The multiplication of the two matrices in Step~11 of Algorithm~\ref{alg:closed}, $\vect{\Lambda}_k^{-1}$, with a matrix $(p+q) \times p$ costs $p(p+q)^2$, gives a total number of operations for all $N$ steps of
$$
    c_\text{fwd} = N\left ((p+q)^3 + (p+2)(p+q)^2  \right ),
$$
while the \emph{backward pass} demands multiplication of a scalar by a matrix of size $(p + q) \times (p + q)$  accounting for $(p+q)^2$ operations, and a multiplication between a matrix of size  $(p + q) \times (p + q)$, and one of size  $(p + q) \times p$, yielding $p(p+q)^2$ operations. Thus, the total number of backward pass operations is 
$$
    c_\text{bwd} = N \left((p + 1) (p+q)^2\right).
$$ \end{proof}
\vspace{-0.5cm}
\subsection{Preconditioning}
To account for eventual irregularities in the data collected that may impair the mathematical operations needed to compute a solution, namely the multiple matrices that need to be inverted, we perform a preconditioning process when necessary. The proposed procedure can be compacted by a new definition of matrices $\vect{S}$ and $\vect{\Theta}$ that regulate the formulation of the upper and lower triangular matrices~\cite{axelsson1996iterative}. As such, indicated with the superscript $\vect{PC}$, we redefine~\eqref{eq:aux-defs} and get
\begin{equation*} \label{eq:aux-precond}
    \begin{cases}
        \vect{S^{PC}}_{kk} =\vect{I}\\
        \vect{S^{PC}}_{i,j} = \vect{S}^{-1}_{ii} \vect{S}_{i,j} & \text{for } i \neq j \\
        \vect{\Theta^{PC}}_k = \vect{S}^{-1}_{kk} \vect{\Theta}_k
    \end{cases}.
\end{equation*} Applying this new formulation to Algorithm~\ref{alg:closed}, we get a more robust solution to the problem, circumventing numerical destabilization caused by large matrix condition numbers. For our simulations, we found that it was not necessary to apply preconditioning to the data. We note that applying preconditioning will change the result of Theorem~\ref{theor:teorema2}, although not changing the linear complexity in $N$.

\subsection{COSMIC validation}

To demonstrate the previously stated theoretical results, a simulation and testing environment was developed.
A classical spring-mass-damper system is first simulated and excited in a Simulink model to allow for data collection both in LTI and LTV setups. The validation is performed with parameter $\lambda_k$ constant throughout the trajectory, thus being represented by $\lambda$. 

This data is used as input into COSMIC and multiple models are trained, in a Python environment, to understand the behavior of the proposed algorithm in different settings, by either varying the $\lambda$ or the noise that disturbs the state.

Finally, the estimated model is excited with previously selected inputs and we are able to evaluate the performance, by comparing with the ground truth. The metrics used are the estimation error and the prediction error.

Taking into account the spring-mass-damper system, considered the ground truth, the estimation error, i.e. how close the estimated model, $\hat{\vect{C}}$, is to the true system, is defined as $\| \hat{\vect{C}} - \vect{C_{gt}}\|_F$. Regarding the predictive power of COSMIC, we propose the evaluation of the predicted state, $\hat{\vect{x}}(k+1)$ when compared to the true state from a simulated trajectory that was not part of the training data set. As such, we address the metric by defining the norm of the error as $ \| \hat{\vect{x}}(k+1) - \vect{x}(k+1) \|_2$, allowing for the assessment of the error propagation. Moreover, for the following validation study, we assumed that the measurements were disturbed by noise, which can be characterized by $\omega \sim \vect{\mathcal{N}}(0,\,\sigma^{2})$ with variance $\sigma^2$.

\begin{figure}[!htb]
  \centering
  \includegraphics[width=0.35\textwidth]{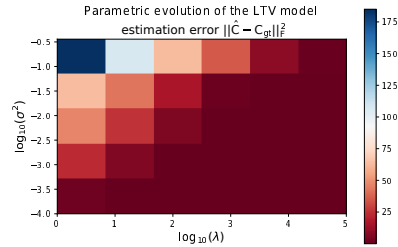}
  \caption[Comparative visualization of the estimation error.]{System estimation performance for different noise levels and values of $\lambda$, compared with the true system. Higher values for the noise and lower values of $\lambda$ have a negative impact in the system performance. The redder in the color scale representing the error, the better.}
  \label{fig:parametric-ltv-comp}
\end{figure}

To infer how the performance of the algorithm depends on the measurement noise and parameter $\lambda$, we opted to develop a parametric study where, for each noise variance, we tested the estimation for a reasonable range of values for parameter $\lambda$. Figure~\ref{fig:parametric-ltv-comp} exposes the variation of the estimation error with the noise added to the measurements for the LTV system and it is clear that for a noise standard deviation of about 10\% of the maximum initial conditions, the algorithm is heavily affected. Reducing this value, the estimation error drops significantly. It is evident that smaller values for the parameter $\lambda$ lead to a worse performance of the algorithm, as the variation between instants is not significant, resulting much better with higher values of $\lambda$.

As part of the validation process, we can also evaluate what is the $\lambda$ that best adapts the estimated model to the true system characteristics. To do so, we evaluate the system in the previous conditions, choosing an appropriate value for the noise, in this case a standard deviation of about 1\% of the initial value. The $\lambda$ that best fits the system we are trying to estimate is $\lambda = 10^5$, as the lowest errors occur at this value.


\subsection{Benchmark}

The novelty of the presented algorithm comes from its efficiency, mainly when compared to other approaches to the same problem. As we are solving a convex optimization problem, we can obtain a solution through \texttt{cvxpy}, a domain specific language for convex optimization problems and is widely used ~\cite{diamond2016cvxpy}. For this specific use case, the ECOS solver~\cite{domahidi2013ecos}, an interior-point solver for second-order cone programming, is automatically invoked. Hence, we evaluate the problem agnostic approach, as in~\eqref{eq:solution} with this strategy. 

\begin{algorithm}
\caption{Stochastic Block Coordinate Descent}\label{alg:SBCD}
\begin{algorithmic}[1]
\Require{$\vect{D}$, $\vect{X'}$, $[\lambda_k]_{k=1}^{k=N-1}$, $\varepsilon$} 
\Ensure{$\vect{C^*}$}

\Statex

\State $\vect{C_0} \gets$ initialized with random values

\State $\vect{S}_{00} \gets \vect{D}^T(0) \vect{D}(0) + \lambda_1 \vect{I}$
\State $\vect{S}_{N-1,N-1} \gets \vect{D}^T(N-1) \vect{D}(N-1) + \lambda_{N-1} \vect{I}$

\For{$k \in [1,...,N-2]$} 
\State $\vect{S}_{kk} \gets \vect{D}^T (k)\vect{D}(k) + (\lambda_k + \lambda_{k+1}) \vect{I}$
\EndFor

\
\For{$k \in [0,...,N-1]$} 
\State $\vect{\Theta}_k \gets \vect{D}^T(k) \vect{X'}^T(k)$
\State $\vect{\nabla h_0}(k)\gets \vect{D}^T(k) (\vect{D}(k)^T \vect{C}_0(k) - \vect{X'}^T(k)) $
\EndFor

\
\State $\vect{\nabla} g_0 \gets \vect{F}^T \vect{\Upsilon}\vect{F} \vect{C}_0$
\State $\vect{\nabla} f_0 \gets \vect{\nabla}h_0 +\vect{\nabla}g_0$
\State $t \gets 0$

\
\While{$\|\vect{\nabla}f (\vect{C}_t) \|_F^2 > \varepsilon $}

\State $RndOrd = random(0, ..., N-1)$
\For{$i \in RndOrd$}

        \If{$i$ is 0}
            \State $\vect{C}_{t+1}(i) \gets \vect{\Theta}_{ii}^{-1} (\vect{\Theta}_i + \lambda_{i+1} \vect{C}_t (i+1))$
        \ElsIf{$i$ is $N-1$}
            \State $\vect{C}_{t+1}(i) \gets \vect{\Theta}_{ii}^{-1} (\vect{\Theta}_i+ \lambda_i \vect{C}_t (i-1))$
        \Else
            \State $\vect{C}_{t+1}(i) \gets \vect{\Theta}_{ii}^{-1} (\vect{\Theta}_i+ \lambda_i \vect{C}_t (i-1) + \lambda_{i+1} \vect{C}_t (i+1))$
        \EndIf

        \
        \State $\vect{\vect{\delta}} (i) \gets \vect{C}_{t+1}(i) - \vect{C}_{t}(i)$
        
        \
        \State $ \vect{\nabla}h_{t+1}(i) \gets  \vect{\nabla}h_{t} + \vect{D}^T(i)\vect{D}(i) \vect{\vect{\delta}}(i)$
        
        \If{$i$ is 0}
            \State $ \vect{\nabla}g_{t+1}(i) \gets \vect{\nabla}g_{t}(i)  + \lambda_{i+1} \vect{\delta}(i)$
            \State $ \vect{\nabla}g_{t+1}(i+1) \gets \vect{\nabla}g_{t}(i+1)  - \lambda_{i+1} \vect{\delta}(i)$
        \ElsIf{$i$ is $N-1$}
            \State $ \vect{\nabla}g_{t+1}(i-1) \gets \vect{\nabla}g_{t}(i-1)  - \lambda_{i} \vect{\delta}(i)$
            \State $ \vect{\nabla}g_{t+1}(i) \gets \vect{\nabla}g_{t}(i)  + \lambda_{i} \vect{\delta}(i)$
        \Else
            \State $ \vect{\nabla}g_{t+1}(i-1) \gets \vect{\nabla}g_{t}(i-1)  - \lambda_{i} \vect{\delta}(i)$
            \State $ \vect{\nabla}g_{t+1}(i) \gets \vect{\nabla}g_{t}(i)  +(\lambda_{i} + \lambda_{i+1})\vect{\delta}(i)$
            \State $ \vect{\nabla}g_{t+1}(i+1) \gets \vect{\nabla}g_{t}(i+1)  - \lambda_{i+1} \vect{\delta}(i)$
        \EndIf
        
        \
        \For{$i \in [i-1, i, i+1]$ }
            \State $\vect{\nabla}f_{t+1} \gets \vect{\nabla}h_{t+1} + \vect{\nabla}g_{t+1} $ \Comment{Only the necessary}
        \EndFor
        \State $t \gets t + 1$
\EndFor
\EndWhile

\Return $\vect{C}^* \gets \vect{C}_t$

\end{algorithmic}
\end{algorithm}

In addition, we develop a solution of~\eqref{eq:prob} through a Stochastic Block Coordinate Descent (SBCD) algorithm.  This approach allows for a simplification regarding \texttt{cvxpy}, as it treats each $\vect{C}(k)$ as an independent optimization variable for an iteration of the algorithm, while keeping all the other values constant, simplifying the calculations and allowing for an easier derivative verification. By evaluating the value of the derivative at the new $\vect{C}$, we can infer if the optimality condition was reached or not. However, this approach is iterative, which means that it may not be able to reach a solution if the proper limits are not met. Nevertheless, the SBCD approach enables different loss functions for the data fidelity term, like, e.g., the Huber M-estimator, robust to outlier measurements. Algorithm~\ref{alg:SBCD} describes in detail the implementation of SBCD for our optimization problem.

Finally, we run again the COSMIC algorithm. These simulations, as all the ones run in a Python environment, were made in Google Colab development setting, with 13GB of RAM available and the Intel(R) Xeon(R) CPU @ 2.20GHz processor.

\begin{table}[H]
\caption{Performance comparison of different optimization strategies to solve Problem~\ref{prob:prob-with-sum}. Time in seconds.}
\vspace{0.2cm}
\begin{center}
\begin{tabular}{lcccc}
\hline
\textbf{Instants} & \textbf{\texttt{cvxpy}} & \textbf{\textit{SBCD}}& \textbf{\textit{COSMIC}} \\
 &  \textbf{Time} & \textbf{Time} &  \textbf{Time}\\
\hline
100 & 2.929 & 0.031 & \textbf{0.014}\\
1000 & 42.588 & 0.273 & \textbf{0.106}\\
10000 & * & 3.531 & \textbf{1.017}\\
100000 & * & 21.055 & \textbf{9.696}\\
\hline
\multicolumn{4}{l}{\small *Session crashed after using all available RAM.} 
\end{tabular}
\label{table:compare}
\end{center}
\end{table}
To perform the comparison in Table~\ref{table:compare}, as we are studying the performance of the methods in cost and time, we simulated a 10 Hz sampling rate trajectory and varied its length between 10 s, 100 s, 1000 s and 10000 s with $\lambda = 10^{-3}$ and stopping the SBCD at 1 million iterations, while using the setting presented in Algorithm~\ref{alg:SBCD}.

The \texttt{cvxpy} approach tries to solve the problem for all time steps simultaneously, leading to a solution that requires the machine to solve a convex problem involving a $N(p+q) \times N(p+q)$ matrix, as stated in~\eqref{eq:solution}. As the number of instances per trajectory increases, the complexity also increases and the \texttt{cvxpy} approach can no longer compute a solution, while both SBCD and closed-form COSMIC approaches continue to perform as expected. Regarding the cost function, no significant changes are observed from one method to another, as expected.

This leads to the conclusion that the closed-form algorithm is the best option to solve~\eqref{eq:prob}, as it can achieve minimal cost values with an efficient use of computational resources, reaching a solution for all cases tested, performing significantly better than the other approaches, reducing up to three of orders of magnitude the computational time from the ECOS solver. Moreover, analyzing Table~\ref{table:compare}, we can experimentally observe the linearity with the number of instances per trajectory that is stated in Theorem~\ref{theor:teorema2}.

\section{Controller design for the estimated model}
\label{chapter:results}







Regarding the estimated model, using the same system and simulation setting that was used for the validation procedure, we applied the parameters found to be the most suiting to develop and save a model, i.e. $\lambda = 10^5$, and the measurements were disturbed by noise with standard deviation $\sigma = 0.06$. Then, we load the new model into another Python environment to allow for it to be used in the dynamic programming algorithm that results in an optimal control time varying gain. Afterwards, the solution is returned to Simulink and tested in the original spring-mass-damper system.

For the design phase of the controller, we develop a tuning framework taking into account the LQR needs, keeping matrices $\vect{Q}_k$ and $\vect{R}_k$ constants throughout the trajectory and depending only on the elements from their diagonals, entailing that the most suiting design parameters are $\frac{q_v}{q_x} = 10^{-1}$, $\frac{r}{q_x} = 10^{-3}$ and $q_x = 1$. Figure~\ref{fig:init-cond} shows the system response to different inputs when controlled by the LTV LQR drawn from the estimated model. It is clear that the controller designed from the COSMIC estimated model can follow the reference from different initial conditions and it is a good solution to the control challenge. 

\begin{figure}[htb]
  \centering
  \includegraphics[width=0.3\textwidth]{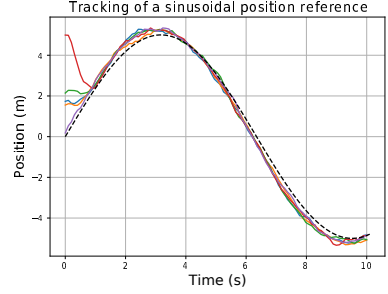}
  \caption{System position response from different initial conditions. The controller designed is able to follow the reference and respond quickly to different initial conditions.}
  \label{fig:init-cond}
\end{figure}

After performing a statistical analysis of the tracking error, we can confidently say that the estimated model is a good approximation of the system, as shown by the close values of the statistical metrics in Table~\ref{table:stats-control-error}. Moreover, it allows for a better controller synthesis than previously used techniques, performing better than the LTI LQR, presenting a smaller mean and sum of squared error, once again from Table~\ref{table:stats-control-error}. 

Thus, the model estimated by COSMIC is a good approximation of the spring-mass-damper system used throughout this work and the dynamic programming strategy worked well for the problem posed. Hence, this system identification and controller design framework is validated and we can further test it in more complex environments.

\begin{table}[ht!]
\caption{Statistical comparison of the controller design methods. Mean, standard deviation and sum of the squared errors relative to the reference tracking errors for the optimal controllers considered.}
\vspace{0.1cm}
\begin{center}
\begin{tabular}{lccc}
\hline
\textbf{Controller} & \textbf{Mean} & $\mathbf{\sigma}$ & $\mathbf{\sum \text{error}^2}$ \\
\hline
\textbf{COSMIC} & \textbf{0.1050} & \textbf{0.7346} & \textbf{0.5506} \\
Ground truth & 0.0994 &  0.7441 & 0.5635\\
Time-invariant & 0.1218 & 0.7425 & 0.5661\\
\hline
\end{tabular}
\label{table:stats-control-error}
\end{center}
\end{table}

\section{Comet Interceptor as a Case Study}
Using the framework formulated throughout this work in the Comet Interceptor mission, we can verify its applicability to real world systems. The work presented in this section is an approach to the first step of the controller design process, where the nonlinear system is represented in a Simulink model, exploiting the physical knowledge of the spacecraft attitude. Firstly, we use a Low Fidelity Simulator, where only the nonlinear dynamics of the problem are modeled. Furthering the analysis, taking into account external perturbations, we use a Functional Engineering Simulator to collect data, which is more demanding. System identification results evaluation is done by comparing the estimated model with the linearized and discretized attitude error system, as described in~\cite{lourenco:2021:ModelPredictiveControl}.

We opt to address the problem with varying $\lambda_k$. Taking into account the previous knowledge about the system, i.e. the angular velocity over the $y$ axis that is desired, we establish that the work can be performed considering two different values for $\lambda_k$ and three zones (the middle of the trajectory and the extremities) where these values will be maintained constant. 

\begin{figure}[!htb]
    \centering
    \includegraphics[width=0.45\linewidth]{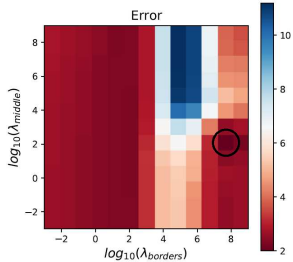}
  \caption{Parametric results for the variation of $\lambda_k$ along the trajectory in the Low Fidelity Simulator. The best results occur for $\lambda_{middle} = 10^2$ and $\lambda_{borders} = 10^8$, indicated by the black circle. The redder in the color scale representing the error, the better.}
  \label{fig:lambda-comp-CISim}
\end{figure}

Regarding the Low Fidelity Simulator, we performed a parametric study of the state error, varying both $\lambda_{middle}$ and $\lambda_{borders}$. Analyzing Figure~\ref{fig:lambda-comp-CISim}, we choose $\lambda_{middle} = 10^2$ and $\lambda_{borders} = 10^8$, which present the lowest state estimation errors. Although the neighboring values for the regularization factor result in bigger errors, we verified in the simulations that this was not a problem for our solution. The chosen values for $\lambda_k$ are in line with the analysis of the system and its expected behavior. For the zones further away from the closest approach point, the system dynamics is much slower and the variations between instants are less disruptive, which is in line with a higher value of $\lambda_k$, that imposes a narrower difference between the optimal variable variation in consecutive instants. In the middle of the trajectory, the spacecraft is at its closest point to the target and its attitude needs to change much faster to maintain a pointing error small enough, thus the difference between two instants needs to be larger, which is allowed by the smaller value of $\lambda_{middle}$.

\begin{figure}[!htb]
\begin{subfigmatrix}{2}
    \subfigure[System identification for the LFS, represented by one of the state components, the angular error in the $z$ axis. The COSMIC estimated system, in red, always coincides with the true system, where the linearization, in blue, does not.]
    {\centering\includegraphics[width=0.56\linewidth]{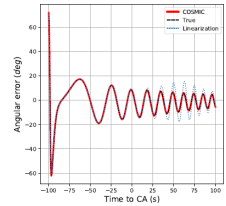}}
    \subfigure[Pointing error resulting from the designed controller application to LFS. Each line is a trajectory with different initial conditions.]
    {\includegraphics[width=0.56\linewidth]{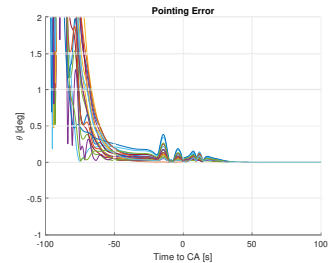}}
\end{subfigmatrix}
\caption{Results for the Low Fidelity Simulator.}
\label{fig:lin-cosmic-comp}
\end{figure}

The results shown in Figure~\ref{fig:lin-cosmic-comp}(a) refer to the implementation of COSMIC with data from the Low Fidelity Simulator. We verify that the results from COSMIC are closer to the real system than the results from the linearized system, which can be seen in the difference in the oscillatory behavior after the point of closest approach, with the blue line moving away from the ideal result. As such, we also developed a new controller, with the dynamic programming framework, and we are now able to input the optimal gains calculated for each distinct instant into the simulator, instead of the constant gain that had been previously used. Figure~\ref{fig:lin-cosmic-comp}(b) shows the results from varying initial conditions and it is clear that the controller from the linear system identification performed by COSMIC is able to control the nonlinear system and is a good option for the first approach to the GNC framework of the mission.

Moving on to the Functional Engineering Simulator, where effects such as comet particles impacts are modeled, we present the results of the system identification in Figure~\ref{fig:traj-eval}. Once again, the COSMIC model is much closer to the true system than the linearized error dynamics. In this case, we can observe oscillations in the linearized system that do not exist in the COSMIC estimated case. Therefore, we can infer that COSMIC can identify the system even with the external disturbances and, once again, that it can perform better than the linearization approach.



\begin{figure}[!htb]
\centering
\includegraphics[width=0.45\linewidth]{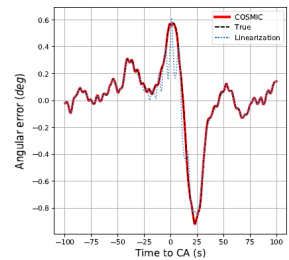}
\caption{System identification results for the Functional Engineering Simulator, represented by one of the state components, the angular error in the $z$ axis. COSMIC estimated system, red, is much closer to the true system than the linearization, blue.}
\label{fig:traj-eval}
\end{figure}

\section{Conclusion}
The main contribution of this work was the development of COSMIC, a closed-form system identification algorithm from data for linear time-variant systems, formulating the identification problem as a regularized least squares problem, with a regularization term that discourages a high frequency variation between the solution consecutive instances. We put forward COSMIC, a closed-form solution method, that scales linearly with the number of time instants considered. COSMIC, as a closed-form algorithm, converges after a finite and well-known number of steps, thus removing the uncertainty and additional work of deciding on a stopping criterion. COSMIC reduces in three orders of magnitude of the computation time when compared to a general purpose solver and it is faster than a special designed iterative approach. We are confident that the results from this work will help develop the system identification procedure for LTV systems and the representation as a regularized least squares problem can be extremely useful for multiple settings. Moreover, the application in a real Space mission is a great step towards taking advantage of all the data available and we COSMIC is used in the future as a tool to expedite the early phases of development of the GNC and AOCS frameworks for different missions.

\begin{ack}
We would like to thank João Franco, from CoLAB+ Atlantic collocated at GMV, for the support provided and his work on the simulator development.  
\end{ack}

\bibliographystyle{plain}        
\bibliography{autosam}           

\begin{thebibliography}{10}

\bibitem{agbabian1991system}
Mihran~S. Agbabian, Sami~F. Masri, R.K. Miller, and Thomas~K. Caughey.
\newblock System identification approach to detection of structural changes.
\newblock {\em Journal of Engineering Mechanics}, 117(2):370--390, 1991.

\bibitem{aastrom1971system}
Karl~Johan {\AA}str{\"o}m and Peter Eykhoff.
\newblock System identification—a survey.
\newblock {\em Automatica}, 7(2):123--162, 1971.

\bibitem{axelsson1996iterative}
Owe Axelsson.
\newblock {\em Iterative solution methods}.
\newblock Cambridge University Press, 1996.

\bibitem{boyd2004convex}
Stephen Boyd and Lieven Vandenberghe.
\newblock {\em Convex optimization}.
\newblock Cambridge University Press, 2004.

\bibitem{brunton2019data}
Steven~L Brunton and J~Nathan Kutz.
\newblock {\em Data-driven science and engineering: Machine learning, dynamical
  systems, and control}.
\newblock Cambridge University Press, 2019.

\bibitem{budiyono2011modeling}
Agus Budiyono, Widyawardana Adiprawita, and Bambang Riyanto.
\newblock Modeling and control of unmanned flying robots.
\newblock In {\em Proceedings of the 2011 International Conference on
  Electrical Engineering and Informatics}, pages 1--7. IEEE, 2011.

\bibitem{cheng2015interplay}
Y~Cheng, Y~Wang, O~Camps, and M~Sznaier.
\newblock The interplay between big data and sparsity in systems
  identification: some lessons from machine learning.
\newblock {\em IFAC-PapersOnLine}, 48(28):1285--1292, 2015.

\bibitem{crampin2006system}
Edmund~J Crampin.
\newblock System identification challenges from systems biology.
\newblock {\em IFAC Proceedings Volumes}, 39(1):81--93, 2006.

\bibitem{dean2018sample}
Sarah Dean, Horia Mania, Nikolai Matni, Benjamin Recht, and Stephen Tu.
\newblock On the sample complexity of the linear quadratic regulator.
\newblock {\em Foundations of Computational Mathematics}, 2018.

\bibitem{diamond2016cvxpy}
Steven Diamond and Stephen Boyd.
\newblock Cvxpy: A python-embedded modeling language for convex optimization.
\newblock {\em The Journal of Machine Learning Research}, 17(1):2909--2913,
  2016.

\bibitem{domahidi2013ecos}
Alexander Domahidi, Eric Chu, and Stephen Boyd.
\newblock {ECOS}: An {SOCP} solver for embedded systems.
\newblock In {\em 2013 European Control Conference (ECC)}. IEEE, 2013.

\bibitem{dudul2004identification}
SV~Dudul and AA~Ghatol.
\newblock Identification of linear dynamical time-variant systems using
  feedforward neural network.
\newblock {\em IE(I) Journal}, 2004.

\bibitem{esa:2019:AssessmentMissionIntercept}
{ESA}.
\newblock Assessment of {{Mission}} to {{Intercept}} a {{Long Period Comet}} or
  {{Interplanetary Object}}.
\newblock {{CDF Study Report}}, {ESA}, December 2019.

\bibitem{formentin2021control}
Simone Formentin and Alessandro Chiuso.
\newblock Control-oriented regularization for linear system identification.
\newblock {\em Automatica}, 127, 2021.

\bibitem{Glad2013}
S.~Torkel Glad.
\newblock {\em Modeling of Dynamic Systems from First Principles}, pages 1--9.
\newblock Springer London, London, 2013.

\bibitem{hardt2016gradient}
Moritz Hardt, Tengyu Ma, and Benjamin Recht.
\newblock Gradient descent learns linear dynamical systems.
\newblock {\em Journal of Machine Learning Research}, 2016.

\bibitem{isaacson_analysis_1994}
Eugene Isaacson and Herbert~Bishop Keller.
\newblock {\em Analysis of Numerical Methods}.
\newblock Dover Publications, New York, 06 1994.

\bibitem{kumpati1990identification}
S~Narendra Kumpati and Parthasarathy Kannan.
\newblock Identification and control of dynamical systems using neural
  networks.
\newblock {\em IEEE Transactions on Neural Networks}, 1(1):4--27, 1990.

\bibitem{lamnabhi2017systems}
Francoise Lamnabhi-Lagarrigue, Anuradha Annaswamy, Sebastian Engell, Alf
  Isaksson, Pramod Khargonekar, Richard~M Murray, Henk Nijmeijer, Tariq Samad,
  Dawn Tilbury, and Paul Van~den Hof.
\newblock Systems \& control for the future of humanity, research agenda:
  Current and future roles, impact and grand challenges.
\newblock {\em Annual Reviews in Control}, 43, 2017.

\bibitem{lin2020system}
Sen Lin, Hang Wang, and Junshan Zhang.
\newblock System identification via meta-learning in linear time-varying
  environments.
\newblock {\em arXiv:2010.14664}, 2020.

\bibitem{Ljung1998}
Lennart Ljung.
\newblock System identification.
\newblock In {\em Signal Analysis and Prediction}, pages 163--173.
  Birkh\"{a}user Boston, 1998.

\bibitem{lourenco:2021:ModelPredictiveControl}
Pedro Louren{\c c}o, Jo{\~a}o Franco, Tiago Milhano, and Jo{\~a}o Branco.
\newblock Model {{Predictive Control}} for {{On}}-{{Board}}-{{Optimized
  Attitude Guidance}} for {{Cometary Fly}}-{{By}} - {{Problem Statement}},
  {{Solutions}} and {{V}}\&{{V Process}}.
\newblock In {\em Proceedings of the 8th {{International Conference}} on
  {{Astrodynamics Tools}} and {{Techniques}}}, {Sopot, Poland (Virtual)}, June
  2021. {ESA}.

\bibitem{sarkar2019near}
Tuhin Sarkar and Alexander Rakhlin.
\newblock Near optimal finite time identification of arbitrary linear dynamical
  systems.
\newblock In {\em International Conference on Machine Learning}, pages
  5610--5618. PMLR, 2019.

\bibitem{schoukens2019nonlinear}
Johan Schoukens and Lennart Ljung.
\newblock Nonlinear system identification: A user-oriented road map.
\newblock {\em IEEE Control Systems Magazine}, 39(6):28--99, 2019.

\bibitem{vanbeylen2013nonlinear}
Laurent Vanbeylen, Ebrahim Louarroudi, and Rik Pintelon.
\newblock How nonlinear system identification can benefit from recent
  time-varying tools: the time-varying best linear approximation.
\newblock In {\em 52nd IEEE Conference on Decision and Control}. IEEE, 2013.

\end{thebibliography}
\end{document}